\documentclass[12pt]{amsart}

% For partial derivatives

% Integral dx

% Alias for the Solution section header

% Probability commands: Expectation, Variance, Covariance, Bias

%Hi professor. You are in the wrong place. click on the arrow to the left. it will give you a menu with a list of files. click on main.tex. 
%Operators

\DeclareMathOperator{\cof}{\cof}

\usepackage[english]{babel}
\usepackage[utf8]{inputenc}
\usepackage{amsmath}
\usepackage{amssymb}
\usepackage{amsfonts}
\usepackage{amsthm}
\usepackage{mathrsfs}
\usepackage[all]{xy}
\usepackage[pdftex]{graphicx}
\usepackage{color}
\usepackage[pdftex,dvipsnames]{xcolor} 
\usepackage{cite}
\usepackage{url}
\usepackage{indent first}
\usepackage[labelfont=bf,labelsep=period,justification=raggedright]{caption}
\usepackage[english]{babel}
\usepackage[utf8]{inputenc}
\usepackage[colorlinks=true, allcolors=blue]{hyperref}
\usepackage[colorinlistoftodos]{todonotes}
\usepackage{tkz-fct}
\usepackage{mathdots}
\usepackage{comment}
\usepackage{float}

\usepackage{algpseudocode}
\usepackage{mathrsfs}
\usepackage{bbm}
\usepackage{dsfont}
\usepackage{theoremref}
\usepackage[dvipsnames]{xcolor}

\usetikzlibrary{automata,positioning}

% More Useful commands

%Theorem enviorment stuff
\theoremstyle{plain}

\topmargin 0cm
\oddsidemargin 0cm
\evensidemargin 0cm
\textwidth 15cm 
\textheight 21cm

\setlength{\oddsidemargin}{0in}
\setlength{\evensidemargin}{0in}
\setlength{\textwidth}{6.5in}
\setlength{\topmargin}{-0.25in}
\setlength{\textheight}{9in}

\theoremstyle{plain}
\newtheorem{thm}{Theorem}

\newtheorem{lemma}[thm]{Lemma}
\newtheorem{cor}[thm]{Corollary}
\newtheorem{conj}[thm]{Conjecture}
\newtheorem{prop}[thm]{Proposition}

\newtheorem{qn}[thm]{Question}

\theoremstyle{definition}
\newtheorem{defn}[thm]{Definition}

\theoremstyle{remark}
\newtheorem{rem}[thm]{Remark}

\numberwithin{equation}{section}
\numberwithin{thm}{section}
\topmargin 0cm
\oddsidemargin 0cm
\evensidemargin 0cm
\textwidth 15cm 
\textheight 21cm

\usepackage[margin=1in]{geometry} 
\usepackage[T1]{fontenc}
\usepackage[utf8]{inputenc}

\usepackage{tgtermes}

\numberwithin{equation}{section}
\numberwithin{thm}{section}

\begin{document}
\title[Ergodic with Isometric Coefficients]{Nonsingular transformations that are ergodic with isometric coefficients and not weakly doubly ergodic}
\date{\today}

\author[Haddock]{Beatrix Haddock}
\address[Beatrix Haddock]{Department of Applied Mathematics\\ University of Washington\\ Seattle, WA, 98195, USA.}
\email{beatrixh@uw.edu}

\author[Leng]{James Leng}
\address[James Leng]{Department of Mathematics\\
     UCLA \\ Los Angeles, CA 90095, USA.}
\email{jamesleng@math.ucla.edu}

\author[Silva]{Cesar E. Silva}
\address[Cesar E. Silva]{Department of Mathematics\\
     Williams College \\ Williamstown, MA 01267, USA.}
\email{csilva@williams.edu}

\subjclass[2010]{Primary 37A40; Secondary
37A05, 
37A50} 
\keywords{Infinite measure-preserving, nonsingular transformation, ergodic, weak mixing, rank-one}

\maketitle
\begin{abstract}
We study two properties of nonsingular and  infinite measure-preserving ergodic systems: weak double ergodicity, and ergodicity with isometric coefficients. We show that there exist infinite measure-preserving transformations that are ergodic with isometric coefficients but are not weakly doubly ergodic; hence these two notions are not equivalent for infinite measure. We also give type $\text{III}_\lambda$ examples of such systems, for $0<\lambda\leq 1$. We prove that under certain hypotheses, systems that are weakly mixing are ergodic with isometric coefficients and along the way we give an example of a uniformly rigid topological dynamical system along the sequence $(n_i)$ that is not measure theoretically rigid along $(n_i)$ for any nonsingular ergodic finite measure. 
\end{abstract}

\section{Introduction}

   The notion of weak mixing for finite measure-preserving transformations has several different 
   and equivalent characterizations, and each has played different roles in various applications
   of the weak mixing property. It is now well-known that in infinite measure the theory is quite different.
   The first example showing this was given by Kakutani and Parry when they constructed in \cite{KaPa63} 
   infinite measure-preserving Markov shifts $T$ such that $T\times T$ is ergodic but $T\times T\times T$ is not ergodic (as is well-known, this cannot happen for finite measure-preserving transformations). Since then many related notions an examples have been constructed for infinite measure-preserving and nonsingular transformations; we refer to \cite{AdSi18} for a survey of these results and to \cite{GlWe16} where many of these results are discussed in the context 
   of group actions. \\\\
While our first examples are infinite measure-preserving transformations we  also consider a nonsingular versions of our construction. \\\\
%If fact, for each $\lambda\in [0,1]$, we construct a type III$_\lambda$ rank-one transformation $T_\alpha$; 
\textbf{Acknowledgments:} The research for this paper started during the 2018 SMALL undergraduate research project at Williams College, where the first-named author was part of the ergodic theory group. We thank the other members of the ergodic theory group, which included Hindy Drillick, Alonso Espinosa-Dominguez, Jennifer N. Jones-Baro, and Yelena Mandelshtam. Support for the project was provided by National Science Foundation grant DMS-1659037 and the Science Center of Williams College. The  first-named author was also supported by Williams College in summer 2019.  We would like to thank Terrence Adams, and an anonymous referee for comments and suggestions on our work. Part of this work, in particular Section \ref{rigidity} is based on the   undergraduate thesis at Williams College of BH, supervised by the third-named author.   From  August 2019 until August 2021, CS has been serving as a Program Director in the Division of Mathematical Sciences at the National Science Foundation (NSF), USA, and as a component of this job, he received support from NSF for research, which included work on this paper. Any opinions, findings, and conclusions or recommendations expressed in this material are those of the authors and do not necessarily reflect the views of the National Science Foundation.

\section{Preliminaries}  We let $(X,\mathcal B, \mu)$ denote a standard Borel space that we will assume to be  nonatomic, where $\mu$ is a $\sigma$-finite measure, often assume to be a probability measure; sometimes we may omit the $\sigma$-algebra $\mathcal B$ from the notation. 
%We also consider a measure $\mu$ on $\mathcal B$ that is  infinite and $\sigma$-finite and equivalent to $\nu$.
If we start with a $\sigma$-finite infinite measure $(X,\mu)$ we can always  choose  an equivalent probability measure $\nu$. In all cases we assume the measures are nonatomic. A transformation $(X,\nu,T)$ (sometimes we may simply write the transformation  $T$) is nonsingular if it is measurable ($T^{-1}(A)\in \mathcal B$ for all $A\in\mathcal B)$ and $\mu(A)=0$ if and only if $\mu(T^{-1}(A))=0$. A transformation $T$ is ergodic if whenever $\mu(T^{-1}A\bigtriangleup A)=0$ we have $\mu(A)=0$ or $\mu(A^c)=0$, and it is conservative if for all sets $A$ of positive measure there exists an integer $n>0$ with $\mu(A\cap T^{-n}A)>0$.
We will assume our transformations are invertible (i.e., invertible on a set of full measure and with measurable inverse); in this case, as the measures are nonatomic, it is known  that if a transformation is ergodic, then it is conservative, see e.g. \cite[3.9.1]{Si08}. \\\\
A nonsingular   transformation $(X,  \mu, T)$
 is \textbf{doubly ergodic} (DE) if $T \times T$ is ergodic.
It  is \textbf{weakly doubly ergodic} (WDE) if for any two measurable sets $A$, $B$ with positive measure, there exists $n \in \mathbb{Z}$ such that $\mu(T^nA \cap A) > 0$ and $\mu(T^nA \cap B) > 0$ (we note that this notion was originally called doubly ergodic  in  \cite{BoFiSi01}, and that we can choose $n>0$ in the definition).
 The transformation $T$ is \textbf{ergodic with isometric coefficients} (EIC---see \cite{GlWe16}) if for any   separable metric space $Y$ and $S : Y \to Y$ an invertible isometry,  for any equivariant Borel  map  $\phi : (X, \nu,T) \to Y$,  i.e., $S \circ \phi = \phi \circ T$ $\mu$-a.e., the image of almost all points in $X$ under $\phi$ is a single point., i.e., the map $\phi$ is constant a.e. 
  A transformation $T$ is \textbf{weakly mixing} (WM) if for any ergodic   measure preserving transformation on a probability space $(Y, m,S)$, the transformation $T \times S$ is ergodic on $X \times Y$ with respect to the product measure $\mu \otimes m$. 
  All these notions only depend on the measure class of the measure. We use the following result from  in \cite{AaLiWe79}.
\begin{thm}
A nonsingular dynamical system $(X, T, \mathfrak{B}, \mu)$ is {weakly mixing} if and only if whenever  $f \in L^\infty(X,\mathbb C)$, $\lambda\in\mathbb C$, are such that $f \circ T = \lambda f$ a.e., then $f$ is constant a.e. (and $\lambda = 1$).
\end{thm}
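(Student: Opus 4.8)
The plan is to prove the two implications separately; throughout we use that $T$ is conservative ergodic (ergodicity gives conservativity in the nonatomic setting, as noted above). For the forward direction, suppose $T$ is weakly mixing yet admits $f\in L^\infty(X,\bc)$, not constant a.e., with $f\circ T=\lambda f$. Since $T$ is nonsingular it carries null sets to null sets in both directions, so $\{|f\circ T|>t\}=T^{-1}\{|f|>t\}$ is null iff $\{|f|>t\}$ is, whence $\|f\circ T\|_\infty=\|f\|_\infty$; as $f\neq 0$ this forces $|\lambda|=1$. If $\lambda=1$ then $f$ is a nonconstant $T$-invariant function, contradicting ergodicity (which weak mixing implies, taking $S$ the trivial one-point system). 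If $\lambda\neq 1$, let $K=\overline{\{\lambda^n:n\in\bz\}}\subseteq\mathbb{T}$, a nontrivial compact group, and let $R\colon K\to K$ be $Rz=\lambda z$, which is ergodic and measure preserving for Haar measure because $\lambda$ generates a dense subgroup. Then $\Phi(x,z)=f(x)\overline{z}$ satisfies $\Phi(Tx,Rz)=\lambda f(x)\overline{\lambda}\,\overline{z}=\Phi(x,z)$ since $\lambda\overline{\lambda}=1$, so $\Phi$ is a nonconstant $(T\times R)$-invariant function, contradicting ergodicity of $T\times R$. Hence no nonconstant eigenfunction exists.

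For the converse I argue by contrapositive: assuming some ergodic measure preserving $(Y,m,S)$ makes $T\times S$ non-ergodic, I produce a nonconstant $L^\infty$ eigenfunction of $T$. Non-ergodicity gives a $(T\times S)$-invariant set $A\subseteq X\times Y$ with $0<(\nu\otimes m)(A)<1$. Writing $A_x=\{y:(x,y)\in A\}$ for the slices, the invariance $A=(T\times S)^{-1}A$ translates to $A_{Tx}=SA_x$ for a.e.\ $x$, while $x\mapsto m(A_x)$ is $T$-invariant and hence equals a constant $c\in(0,1)$ by ergodicity. Thus $x\mapsto A_x$ defines a Borel map $\phi\colon X\to(\mathcal{M}_c,d)$ into the space $\mathcal{M}_c$ of measurable subsets of $Y$ of measure $c$, metrized by $d(B,B')=m(B\triangle B')$; this is a complete separable metric space, the map $S_*\colon B\mapsto SB$ is an invertible isometry of it (as $S$ is invertible and measure preserving), and $\phi\circ T=S_*\circ\phi$.

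Now $(\mathcal{M}_c,\phi_*\nu,S_*)$ is a factor of the conservative ergodic system $(X,\nu,T)$, hence is itself conservative and ergodic. Conservativity forces a.e.\ point to be recurrent under the isometry $S_*$: covering $\mathcal{M}_c$ by countably many small balls and applying conservativity to each shows $\liminf_n d(S_*^n a,a)=0$ for $\phi_*\nu$-a.e.\ $a$. For an isometry a recurrent point has precompact orbit, and by the classical structure theory of equicontinuous systems its orbit closure is a compact monothetic abelian group $K$ on which $S_*$ acts as translation by a topological generator $\tau$; by ergodicity $\phi_*\nu$ is supported on a single such $K$, so $\phi(x)\in K$ a.e. The map $\phi$ is not constant, for otherwise $A_x\equiv B$ with $SB=B$ and $m(B)=c\in(0,1)$, contradicting ergodicity of $S$; hence $K$ is nontrivial and carries a character $\chi$ with $\lambda:=\chi(\tau)\neq 1$. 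Then $F=\chi\circ\phi$ obeys $F(Tx)=\chi(S_*\phi(x))=\chi(\tau)\chi(\phi(x))=\lambda F(x)$ with $|F|=1$, a nonconstant $L^\infty$ eigenfunction of $T$, which is the desired contradiction.

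The main obstacle is the passage in the last paragraph from the purely measure-theoretic equivariance $\phi\circ T=S_*\circ\phi$ to a genuine eigenvalue, that is, showing the isometric factor is a rotation on a compact abelian group. This is exactly where conservativity of $T$ is indispensable: it supplies the recurrence that makes the $S_*$-orbits precompact and thereby produces the Kronecker (pure point) structure whose characters pull back to eigenfunctions. In the finite measure preserving theory one instead decomposes the invariant function through the spectral theorem for the unitary Koopman operator, but that route is unavailable here because for merely nonsingular $T$ the operator $g\mapsto g\circ T$ is not unitary on $L^2(\nu)$; it is precisely this replacement of spectral theory by recurrence in an isometric coefficient space that is the technical heart of the argument.
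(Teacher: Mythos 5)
You should first note that the paper does not actually prove this statement: it is quoted from Aaronson--Lin--Weiss \cite{AaLiWe79}, so there is no in-paper argument to compare against. Your forward direction is correct and standard: nonsingularity forces $|\lambda|=1$, the case $\lambda=1$ is just ergodicity (obtained from WM via the one-point system), and for $\lambda\neq 1$ the function $\Phi(x,z)=f(x)\overline{z}$ is a nonconstant $(T\times R)$-invariant function, where $R$ is the ergodic Haar-measure-preserving rotation by $\lambda$ on the compact group $\overline{\{\lambda^n\}}\subseteq\mathbb{T}$.

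The converse direction, however, has a genuine gap at the step ``for an isometry a recurrent point has precompact orbit.'' This is false. The orbit of $\phi(x)=A_x$ under $S_*$ is the orbit of $1_{A_x}$ under the Koopman unitary of $S$ inside the measure algebra of $(Y,m)$, and such an orbit is precompact if and only if $1_{A_x}$ lies in the closed span of the eigenfunctions of $S$ (the almost periodic vectors, by the Jacobs--de Leeuw--Glicksberg decomposition); pointwise metric recurrence does not give this. Concretely: take a weakly mixing probability-preserving transformation that is rigid along some sequence $(n_i)$ and a set $A$ with $0<\mu(A)<1$; then $1_A$ is a recurrent point for the isometry induced by the Koopman operator on $L^2$, yet its orbit is precompact only if $1_A$ is a limit of combinations of eigenfunctions, i.e., constant. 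Section 5 of this very paper --- the uniformly rigid isometry $(Y,S,d)$ built from exactly such a transformation, together with the Remark that $Y$ is not compact --- is a counterexample to your claim. Moreover, if the claim were true, every equivariant map from a conservative ergodic system to an isometry of a separable metric space would have essentially precompact image, which would reduce EIC to LCEIC and settle the Glasner--Weiss question that the paper explicitly states is open; that alone should flag the step as suspect. The conclusion you are after (that $x\mapsto F(x,\cdot)$ takes values in the Kronecker part of $L^2(Y,m)$) is true, but it is the substantive content of the Aaronson--Lin--Weiss theorem and requires a different mechanism --- one must kill the weakly mixing spectral component of $F(x,\cdot)$ using conservativity of $T$ together with the spectral theory of the unitary $U_S$ --- rather than deducing it from recurrence of a single point under an isometry.
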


From the definitions it follows  that if a transformation is DE then it is WDE; the fact that the converse 
is not true for  infinite measure-preserving transformations was shown in \cite{AdFrSi97,BoFiSi01}. Glasner and Weiss \cite{GlWe93} proved that DE implies EIC and that  EIC implies WM, and later Loh and Silva proved that WDE implies EIC (these results are in the context of nonsingular group actions).  In  \cite{GlWe93}, it is shown that for nonsingular group actions, WM implies ergodic; for the case of nonsingular transformations $T$ WM implies WM of $T^n, n\neq 0$ \cite[2.1]{MRS99}. For completeness we observe that EIC implies ergodicity: if a nonsingular transformation $T$ is EIC and $A$ is a $T$-invariant set we can define a map $\phi:X\to\{0,1\}$ by $\phi(x)=0$ for $x\in A$ and $\phi(x)=1$ for $x\in A^c$. Let $S$ be the identity on $\{0,1\}$. Then $\phi$ is a non-constant a.e. equivariant map onto the isometry $S$ on $\{0,1\}$; total ergodicity of $T$ can also be shown by a similar argument.

Glasner and Weiss \cite{GlWe93} also showed that in infinite measure EIC does not imply DE.  In this paper we prove that in infinite measure EIC does not imply WDE.   Glasner and Weiss \cite{GlWe16} also asked whether WM implies EIC, which remains open as far as we know; in Section \ref{rigidity}  we study some properties related to this question. In summary, we now know the following implications and that the converse does not hold for each of the first two implications.  

\[DE \implies WDE \implies EIC \implies WM.\]

%In this paper, we will prove that (WDE) $\implies$ (EIC) and rule out several approaches to proving (EIC) $\implies$ (WM).

\subsection{Rank-One Transformations}
\hfill \break
Rank-one transformations play a central role in ergodic theory and this concept is central to our paper. We describe the cutting and stacking construction of rank-one transformations.\\\\
A \textbf{column}  is a finite ordered collection of intervals of finite measure $$C_k=\{C_k(0), C_k(1),\dots,C_k(h_k-1)\}.$$   $C_k(0)$ is called the \textbf{base} of the $k$th column, $C_k(i)$ is the $i$-th \textbf{level} of the $k$th column, and $h_k$ is the \textbf{height} of the $k$th column; the \textbf{width} of the column $w(C_k)$ is the length of the largest interval in $C_k$.  In the measure-preserving case all the intervals of a given column are of the same length, in the nonsingular case they may be of different lengths. Each column defines a column map $T_{C _k}$
that consists of the (unique orientation preserving) affine translation (simply a translation in the measure-preserving case) that sends interval  $C_k(i)$ to interval $C_{k}(i + 1)$, for $0\leq i<h_k-1$; this map is defined on every level of $C_k$ except for the top level. To specify a rank-one construction we are given $(r_k)$, a sequence of integers with $r_k\geq 2$, and   $(s(k,i)), i=0,\ldots,h_k-1$,  a doubly indexed sequence of nonnegative integers. Let column $C_0$ consist of a single interval.  For each $k\geq 0$, column $C_{k+1}$  is obtained from column $C_{k}$ by first subdividing each interval in $C_k$  into $r_k$ subintervals to obtain $r_k$ subcolumns; denote them $C_{k,0}, \ldots, C_{k,r_k-1}$.  Over each subcolumn $C_{k,i}$    place $s(k,i)$  new intervals to obtain a new subcolumn $C_{k,i}^\prime$, now height $h_k+s(k,i)$, for each $i=0,\ldots,r_k-1$. Finally stack  the new subcolum $C_{k,i+1}^\prime$    right over subcolumn  $C_{k,i}^\prime$ for $i=0,\ldots,r_k-1$  to obtain column $C_{k+1}$; i.e, map the top level in $C_{k,i}^\prime$ to the bottom level in $C_{k,i+1}^\prime$. Note that the column map $T_{C_{k+1}}$ agrees with $T_{C_k}$ wherever $T_{C_k}$ is defined. We let $X$ be the union of all the columns; the new subintervals can be chosen so that $X$ is a interval, which can be of finite or infinite measure. We let $T$ be the limit of the $T_{C_k}$.
Regarding the cuts into $r_k$ subintervals we note that in the measure-preserving case the cuts are uniform, i.e., all the subintervals are of the same length, and in the nonsingular case one must specify in advance the ratios on the subintervals in the cuts but they are done so that the width of the column $w(C_k)$ decreases to 0 as $k\to\infty$.
  
It follows that  for every measurable set of finite measure $A\subset X$ and every $\varepsilon>0$ there exists  $N>0$ such that for every $j\geq N$ there exists a union $\widehat{C}_j$ of some levels of $C_j$ such that
$$\mu(\widehat{C}_j\triangle A)<\varepsilon.$$
One can show that $T$ is nonsingular and ergodic (see \cite[4.2.1]{RuSi89} for a proof that includes the nonsingular case).

 Famous examples of cutting and stacking transformations include the Chac\'on ($r_k=3, s(k,0)=0, s(k,1)=1,s(k,2)=0,$ for all $k\geq 0$) and Kakutani ($r_k=2, s(k,0)=s(k,1)=0$, for all $k\geq 0$) transformations. See e.g. \cite{Si08} for a discussion of these and other examples. A \textbf{rank-one tower transformation} is a rank-one transformation with one cut ($r_k=2$) and whose spacers are only in the last column ($s(k,0)=0$). An example of such a transformation is the Hajian-Kakutani  skyscraper transformation ($s(k,1)=2h_k$), see e.g. \cite{Si08}.

\section{A Construction that is EIC but not WDE}  We have already mentioned that WDE implies EIC; in this section we show that the converse of this implication is not true; our approach is to develop some techniques to verify the EIC property for some transformations. 

\begin{thm}\thlabel{th1}
There exists a measure-preserving rank-one transformation $T$ on a $\sigma$-finite measure space that is not weakly doubly ergodic but is ergodic with isometric coefficients.
\end{thm}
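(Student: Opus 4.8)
The plan is to build an explicit infinite measure-preserving rank-one transformation $T$ by cutting and stacking, and then to verify separately that it fails WDE and that it is EIC; the second verification is the crux. For the construction I would use a single cut $r_k=2$ with no spacers over the left subcolumn, $s(k,0)=0$, and spacers only over the right subcolumn, taking $s(k,1)=2h_k+1$, so that the heights satisfy $h_{k+1}=2h_k+s(k,1)=4h_k+1$. The leading geometric term $4h_k$ mimics the Hajian--Kakutani skyscraper and is what I will use to obstruct WDE, while the additive $+1$ is the arithmetic device that will force EIC. One has freedom here: any spacer sequence for which a single bounded combination of the near-returns at $h_k$ and at $h_{k+1}$ produces a near-return at $1$, while the coarse structure still defeats WDE, would do.

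For the failure of WDE I would take $A$ and $B$ to be two levels, or finite unions of levels, of a fixed column $C_{k_0}$ and show that the return-time set $\{n:\mu(T^nA\cap A)>0\}$ and the transfer-time set $\{n:\mu(T^nA\cap B)>0\}$ are disjoint. Concretely, for each $j\ge k_0$ I would track exactly which levels of $C_j$ the image $T^nA$ can occupy when it meets $A$ in positive measure, show that the tower structure confines these to a band of levels, and then choose $B$ among the complementary levels. This is the more computational and less conceptual of the two parts.

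The heart of the argument is EIC. Let $S:Y\to Y$ be an invertible isometry of a separable metric space and $\phi$ a Borel map with $\phi\circ T=S\circ\phi$ a.e.; I want $\phi$ constant. The key geometric observation is that, because $s(k,0)=0$, the map $T^{h_k}$ carries the left subcolumn of $C_k$ onto the right subcolumn level by level, sending the left half of $C_k(i)$ onto the right half of $C_k(i)$; in particular $T^{h_k}x$ lies in the same $C_k$-level as $x$ for every $x$ in the left subcolumn, a set of half the column mass. Since the level partitions of the $C_k$ generate the $\sigma$-algebra, for a.e.\ $x$ the value $\phi$ is asymptotically constant on the $C_k$-level containing $x$, so $d\bigl(\phi(T^{h_k}x),\phi(x)\bigr)\to 0$ in measure. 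Equivariance then gives $d\bigl(S^{h_k}\phi(x),\phi(x)\bigr)\to 0$ in measure, i.e.\ $S^{h_k}\to\mathrm{Id}$ along the range of $\phi$, and the identical argument at stage $k+1$ gives $S^{h_{k+1}}\to\mathrm{Id}$.

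It remains to combine these relations, and here the isometry hypothesis is decisive because it forbids error amplification: for any $y$ in the range and integers $a,b$ one has $d(S^{a-b}y,y)=d(S^ay,S^by)\le d(S^ay,y)+d(S^by,y)$. Using $h_{k+1}=2h_k+s(k,1)$ this yields $d\bigl(S^{s(k,1)}y,y\bigr)\le d\bigl(S^{h_{k+1}}y,y\bigr)+2\,d\bigl(S^{h_k}y,y\bigr)\to 0$, and then $s(k,1)=2h_k+1$ gives $d(Sy,y)\le d\bigl(S^{s(k,1)}y,y\bigr)+2\,d\bigl(S^{h_k}y,y\bigr)\to 0$ in measure. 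As $d(Sy,y)$ does not depend on $k$, it must vanish for a.e.\ $y$ in the range; hence $S\circ\phi=\phi$, so $\phi\circ T=\phi$ a.e., and ergodicity of $T$ forces $\phi$ to be constant. The main obstacle is exactly this EIC analysis: one must make the ``asymptotically constant on levels'' and in-measure combination steps quantitatively rigorous, keep the number of combination steps bounded (which pins down the allowable spacer sequences), and do so with a spacer choice that simultaneously preserves the coarse structure needed to defeat WDE.
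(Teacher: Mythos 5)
Your construction, your obstruction to WDE, and above all your EIC mechanism --- near-returns of $S$ at the times $h_k$ and $h_{k+1}=4h_k+1$, combined through the isometry to force $d(Sy,y)<C\epsilon$, then ergodicity --- coincide with the paper's, which uses the same HK$(+1)$ transformation and the same telescoping (arriving at the same $15\epsilon$). The one step that fails as written is the assertion that $d\bigl(\phi(T^{h_k}x),\phi(x)\bigr)\to 0$ in measure, and hence that the dominating quantity $d(S^{h_{k+1}}\phi(x),\phi(x))+4\,d(S^{h_k}\phi(x),\phi(x))$ tends to $0$ in measure. Because $s(k,0)=0$, the map $T^{h_k}$ returns only the \emph{left} subcolumn of $C_k$ to the same level; the right subcolumn is sent into the new spacers, where you have no control whatsoever on $\phi(T^{h_k}x)$. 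So at each scale you only know $d(S^{h_k}\phi(\cdot),\phi(\cdot))<\epsilon$ on a set of roughly half the mass of a fixed finite reference set, and your stage-$k$ bound on $d(S\phi(x),\phi(x))$ is effective only on roughly a quarter of it (you need $x$ good for both $h_k$ and $h_{k+1}$). The exceptional sets do not shrink, so neither the claimed convergence in measure nor the conclusion ``hence $d(Sy,y)=0$ a.e.'' follows as stated.

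The gap is repairable, and the paper's repair is the instructive difference. It first extracts (its Lemma on dense orbits) a single point $y$ in the essential range of $\phi$ whose $S$-orbit is dense there; then for each $\epsilon$ and each $k$ it produces just \emph{one} witness $z_k$ with $d(S^{h_k}\phi(z_k),\phi(z_k))<\epsilon$ (a level $90\%$ full of $A=\phi^{-1}(B(q,\epsilon/2))$ together with $\mu(T^{h_k}L\cap L)\ge\tfrac12\mu(L)$), and transfers each estimate to the fixed anchor $y$ at the cost of a factor $3$ using that $S$ is an isometry. With all near-return times anchored at the same $y$, the telescoping gives $d(Sy,y)<15\epsilon$ for every $\epsilon$, so $\phi$ is $T$-invariant and constant by ergodicity. (Alternatively, your version can be saved without the dense-orbit step by observing that $\{x:S\phi(x)=\phi(x)\}$ is $T$-invariant, so positive measure of your quarter-mass good sets already suffices; but some such device is needed.) The non-WDE half is unproblematic: the paper cites \cite{AdFrSi97,BoFiSi01} for it and carries out the level-tracking computation you sketch only for its type $\text{III}_\lambda$ analogue.
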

The example is a rank-one transformation, first defined by Adams, Friedman and Silva in \cite{AdFrSi97}, that has been called the HK(+1) transformation since it is a variant of the Hajian-Kakutani transformation. It is a rank-one tower transformation. For the base case choose column $C_0$ consisting of the unit interval $[0,1)$, and    let  $r_k=2, s(k,0)=0$ and $s(k,1)=2h_k+1$, for all $k\ge 0$.  So the height is given by $h_{k+1}=4h_k+1$. This defines an infinite measure-preserving rank-one transformation. In \cite{AdFrSi97}, it was shown to be weakly mixing but not doubly ergodic; the proof in \cite{AdFrSi97} that the transformation is not doubly ergodic also works to show it is not weakly doubly ergodic, and another proof is given in \cite{BoFiSi01}. For reference we recall that the Hajian-Kakutani transformation is obtained when at stage $k$ we add $2h_k$ subintervals instead of $2h_{k}+1$ subintervals; this transformation has eigenvalues and is not weakly mixing, in fact it is not totally ergodic (its square  is not ergodic). In this section we show that the HK(+1) transformation is EIC.  We need the following lemma.

\begin{lemma}\thlabel{dense}
Let  $(X, T, \mu, \mathcal{B})$ be an ergodic nonsingular transformation and $(Y, d)$ a separable metric space with $S$ an invertible isometry on $Y$.  Suppose that there is  a Borel  equivariant map $\phi: X\to Y$, i.e., a Borel map such that $S \circ \phi = \phi \circ T$  $\mu$-a.e.
Then  there exists a point $x \in X$ such that $\phi(x)$ has a dense orbit in a set full measure that is in  the image of $\phi$. 
\end{lemma}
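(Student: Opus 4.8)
The plan is to transport the dynamics to the target space via the pushforward measure and then use ergodicity to produce dense orbits there. Since all the relevant notions depend only on the measure class, I would first replace $\mu$ by an equivalent probability measure, so that $\nu := \phi_*\mu$ is a Borel probability measure on $Y$. The equivariance relation $S\circ\phi = \phi\circ T$ a.e.\ then lets me transfer the hypotheses on $T$ to $S$: for any Borel $A\subseteq Y$ one has $\phi^{-1}(S^{-1}A)=T^{-1}(\phi^{-1}A)$ up to a $\mu$-null set, so $\nu(S^{-1}A)=\mu(T^{-1}\phi^{-1}A)$. Nonsingularity of $T$ then gives that $S$ is nonsingular for $\nu$, and if $A$ is $S$-invariant then $\phi^{-1}A$ is $T$-invariant, so ergodicity of $T$ forces $\nu(A)\in\{0,1\}$. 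Thus $(Y,\nu,S)$ is an ergodic nonsingular system.

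Next I would work on the support $K:=\spt\nu$. Because $Y$ is separable metric, hence second countable, $K$ is the complement of a countable union of basic open sets of measure zero, so $\nu(K)=1$; moreover $K$ is closed and, since $S_*\nu\sim\nu$ and $S$ is a homeomorphism, $S$-invariant, and $K\subseteq\overline{\phi(X)}$ (the analytic set $\phi(X)$ has full $\nu$-measure). The heart of the argument is to show that $\nu$-almost every $y\in K$ has $S$-orbit dense in $K$. Fix a countable base $\{U_i\}$ for $Y$ and let $I=\{i:\nu(U_i)>0\}$. For each $i\in I$ the set $W_i:=\bigcup_{n\in\mathbb{Z}}S^{-n}U_i$ is $S$-invariant with $\nu(W_i)\geq\nu(U_i)>0$, so ergodicity gives $\nu(W_i)=1$. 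Hence $G:=\big(\bigcap_{i\in I}W_i\big)\cap K$ has full measure, and for every $y\in G$ and every $i\in I$ there is $n$ with $S^n y\in U_i$. A point $z\in K$ has every neighborhood of positive measure, so every basic open set containing $z$ lies in $I$; this shows the orbit of any $y\in G$ enters every neighborhood of every point of $K$, i.e.\ is dense in $K$.

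Finally I would pull back. Since $\nu(G)=1$, the set $\phi^{-1}(G)$ has full $\mu$-measure and in particular is nonempty; choosing any $x\in\phi^{-1}(G)$ gives $\phi(x)\in G\subseteq K$, and, $K$ being $S$-invariant, the entire orbit $\{S^n\phi(x)\}_{n}$ lies in $K$ and is dense in $K$. Since $\nu(K)=1=\nu(\phi(X))$, the set $K$ is the desired full-measure subset carried by the image of $\phi$, which completes the argument.

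The step I expect to be the main obstacle is the passage from ergodicity to almost-sure density of orbits, namely matching the measure-theoretic statement (invariant sets of positive measure are full) with the topological one (orbits are dense): one must use second countability to keep the relevant union over basic open sets countable, and identify ``basic open set of positive measure'' with ``basic open set meeting the support.'' A secondary care point is that $\phi$ is only equivariant almost everywhere, so the identities relating $S$, $\phi$, and $T$ hold only modulo null sets, and this must be tracked when verifying nonsingularity and ergodicity of the pushforward system.
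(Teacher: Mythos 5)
Your proof is correct, and it is organized along a genuinely different route from the paper's. The paper never passes to the pushforward system: it first invokes an external lemma to produce a single point $x$ with $\mu(\phi^{-1}(B(\phi(x),\varepsilon)))>0$ for every $\varepsilon>0$, then applies ergodicity (and conservativity) of $T$ upstairs to conclude that $\bigcup_{n>0}T^{n}\phi^{-1}(B(\phi(x),\varepsilon))$ has full measure, transfers this through equivariance and through the identity $S^{n}B(y,\varepsilon)=B(S^{n}y,\varepsilon)$ --- which is exactly where the isometry hypothesis is used --- and finally intersects over $\varepsilon=2^{-m}$. You instead verify that $(Y,\phi_*\mu,S)$ is a nonsingular ergodic factor and run the standard ``ergodicity implies almost every point of the support has dense orbit'' argument over a countable base. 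Your version buys three things: it is self-contained (the starting point $x$ exists because the full-measure set $G$ has nonempty preimage, rather than being imported from a cited lemma); it shows almost every $x$ works rather than a single one; and it never uses that $S$ is an isometry, only that it is a homeomorphism, so it proves a slightly more general statement. The paper's version buys brevity: it stays entirely in $X$ and avoids the (routine but necessary) verifications that the pushforward is nonsingular and ergodic and that its support is $S$-invariant. Two small points of care in your write-up: $\phi(X)$ need not be Borel, so ``the analytic set $\phi(X)$ has full $\nu$-measure'' should be read as full \emph{outer} measure, which is all you use since it gives $K\subseteq\overline{\phi(X)}$; and the conclusion you obtain --- density of the orbit in the closed support $K$ rather than literally ``in the image of $\phi$'' --- is the same slight loosening present in the paper's own proof and is exactly what the later applications require.
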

\begin{proof}
By Lemma 1 of \cite{Eigenoperators}, there exists an element $x\in X$ such that for all  $\varepsilon>0$ we have $\mu(\phi^{-1}(B(\phi(x),\varepsilon))>0$. By ergodicity of $T$, as $T$ is also conservative,  $\bigcup_{n>0}T^{n}\phi^{-1} B(\phi(x),\varepsilon)$ is a set of full measure in $X$. We observe that 
\[T^{n}\phi^{-1}B(\phi(x),\varepsilon) =\phi^{-1}S^{n}B(\phi(x),\varepsilon)\]
Thus  $\bigcup_{n>0}S^{n}(B(\phi(x),\varepsilon)=Y$ $\mod \mu\phi^{-1}$. 
%This means that for almost all $y \in Y$ and all $\varepsilon > 0$, there is some $n\in\mathbb Z$ such that $d(S^ny, \phi(x)) < \varepsilon$, 
Since $S$ is an invertible isometry 
it follows that $S^{n} B(\phi(x), \varepsilon) = B(S^{n}\phi(x), \varepsilon)$. 
Let 
\[Z = \bigcap_{m>0} \bigcup_{n > 0} B(S^{n} \phi(x), 2^{-m}).\]
Then $Z^c = \bigcup_{m>0} [\bigcup_{n > 0} B(S^{n} \phi(x), 2^{-m})]^c$ is a countable union of measure zero sets, so $Z$ has full measure, and we can see that $\{S^n \phi(x):n>0\}$ is dense in $Z$.

The union $\bigcup_{i\in\mathbb Z} B(S^i \phi(x),\varepsilon)$ is the entire space but for a set of measure $0$. Taking the intersection over a sequence of $\varepsilon$ tending to $0$, we have a full measure space for which the orbit of $\phi(x)$ is dense in the space.
\end{proof}

\begin{proof}[Proof of Theorem~\ref{th1}]
Let $\mathcal{X} = (X, T, \mu, \mathcal{B})$ denote the HK(+1) transformation and suppose that there exists a Borel measurable equivariant map $\phi$ from $\mathcal{X}$ to a separable isometry $(Y, d, S)$. 
%Let $\nu$ be a probability measure equivalent to Lebesgue measure $\mu$ in the construction of the HK(+1) transformation $T$. 
Choose a point $y=\phi(x)$, for $x\in X$, that has a dense orbit under $S$ in the image of $\phi$ in $Y$. There exists a point $q \in Y$ such that the set
\[A = \{x \in X: d(\phi(x), q) < \frac{\epsilon}{2} \}\]
has positive measure. Let $L$ be a level in column $n$ for which $\mu(A \cap L) > \frac{9}{10}\mu(L)$. Since 
\[\mu(T^{h_n}L \cap L) \ge \frac{1}{2}\mu(L),\text{ we must have }\mu(T^{h_n}A \cap A) > 0.\]
 Thus, there exists some $z \in A$ such that $T^{h_n}z\in A$. So
  \begin{align*}
  d(S^{h_n}(\phi(z)), \phi(z)) &\le d(\phi(z), q) + d(S^{h_n}(\phi(z)), q) \\
  &= d(\phi(z), q) + d ( \phi ( T^{h_n} z ), q) \\&< \frac{\epsilon}{2} + \frac{\epsilon}{2} = \epsilon.
  \end{align*} Next, choose $\ell$ such that 
\[
d(S^\ell(y), \phi(z)) < \epsilon.\] We then have 
\begin{align*}
d(S^{h_n} y, y) &= d(S^{h_n + \ell}y, S^\ell y) \\
&\le d(S^{h_n + \ell}y, S^{h_n}\phi(z)) + d(S^{h_n}\phi(z), \phi(z)) + d(\phi(z), S^\ell y)\\
 &< 3\epsilon.\end{align*}  Now for each $m \ge n$ there exists a level $L'$ in $C_m$  such that $\mu(A \cap L') > \frac{9}{10}\mu(L')$. As a result, we must have $d(S^{h_m}y, y) < 3\epsilon$ for all $m \ge n$. Then we have $d(S^{4h_n + 1}y, y) < 3\epsilon$, so by the triangle inequality, we have
  \begin{align*}
  d(S^{4h_n}y, y) &\le d(S^{h_n}y, y) + d(S^{2h_n}y, S^{h_n}y) + d(S^{3h_n}y, S^{2h_n}y) + d(S^{4h_n}y, S^{3h_n}y) \\&< 12\epsilon.\end{align*}
So
\begin{align*}
d(Sy, y) &\le d(S^{4h_n+1}y, Sy) + d(S^{4h_n + 1}y, y)\\&=  d(S^{4h_n}y, y) + d(S^{4h_n + 1}y, y) < 15\epsilon.
 \end{align*}
This proves that $y$ is a fixed point under $S$, so as the orbit of $y$ is dense the  map $\phi$ is trivial. Thus the HK(+1) transformation is EIC.

Since it was shown in \cite{AdFrSi97,BoFiSi01} that the HK(+1) transformation is not weakly doubly ergodic, this completes the proof that EIC and  WDE are not equivalent. 
\end{proof}

\section{Type $\text{III}_\lambda$ examples}

Let $T$ denote an invertible ergodic nonsingular transformation on a $\sigma$-finite measure space $(X,\mu)$. Let $\omega_n =\omega_n^\mu= \frac{d\mu \circ T^n}{d\mu}$. Define the \textbf{ratio set}
$$r(T) = \{t\in [0,\infty): \forall A, \mu(A) > 0, \exists n > 0, \mu(A \cap T^{-n}A \cap \{x: \omega_n(x) \in B_\epsilon(t)\}) > 0\},$$ where $B_\epsilon(t)=\{x\in [0,\infty): |x-t|<\epsilon\}$.
If $\nu$ is a $\sigma$-finite measure equivalent to $\mu$ there exists a positive a.e measurable function $h$ such that $\nu=h\mu$, so $\omega^\nu=(h\circ T/h)\omega^\mu$, and since every set $A$ of positive measure has a measurable subset of positive measure where $h$ is almost constant, it follows that the ratio  set is the same for all equivalent measures.  It is well known that $r(T) \setminus \{0\}$ is a multiplicative subgroup of the positive real numbers, see e.g.  \cite{DaSi09}. If $r(T) \neq \{1\}$, then $T$ admits no equivalent $\sigma$-finite $T$-invariant measure and we say that it is type III. There are several types of type III transformations:
\begin{itemize}
\item[1.] Type $\text{III}_\lambda$: $r(T) = \{\lambda^n: n \in \mathbb{Z}\} \cup \{0\}$ for some $\lambda \in (0, 1)$
\item[2.] Type $\text{III}_0$: $r(T) = \{0, 1\}$.
\item[3.] Type $\text{III}_1$: $r(T) = [0, \infty)$.
\end{itemize}
In this section, we will extend our construction above to a type $\text{III}_\lambda$ example of a system that is EIC but not WDE. \\\\
Let $\lambda$ be a real number  with  $0<\lambda <1$. We will construct  $\text{III}_\lambda$ nonsingular tower transformations that we will call nonsingular HK$(+1,\lambda)$ transformations,  via cutting and stacking as follows:
\begin{itemize}
\item[1.] Start with the unit interval $[0, 1)$ as our $0$th column: $C_0 = \{[0, 1)\}$.
\item[2.] The $k + 1$th column $C_{k + 1}$ is obtained by cutting intervals of $C_k$ into two in a way such that the ratio of the measure of the two pieces is $\lambda$, with the left piece being larger than the right piece. Then insert $2h_k + 1$ spacers on top of the stack. So $r_k=2, s(k,0)=0, s(k,1)=2h_k+1$, as before, except that now intervals are cut in the uneven ratio $\lambda$.
\end{itemize}
Let $(X, S_\lambda, \mathcal{B}, \mu)$ denote the rank-one system we constructed above; when clear from the context we will write $S$ for $S_\lambda$. We  first show that the system is indeed a type $\text{III}_\lambda$ system.

\begin{thm}
Let $0<\lambda <1$.
$(X, S_\lambda, \mathcal{B}, \mu)$ is a type $\text{III}_\lambda$ ergodic system.
\end{thm}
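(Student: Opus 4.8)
The plan is to compute the ratio set $r(S_\lambda)$ directly from the column structure and show $r(S_\lambda) = \{\lambda^n : n \in \mathbb{Z}\}\cup\{0\}$; ergodicity is already guaranteed by the general rank-one construction (cf. \cite[4.2.1]{RuSi89}), so the real content is the determination of the type. The first step is to record the level measures. Since each cut splits an interval into a left piece and a right piece in ratio $\lambda$ (left larger), and each spacer is chosen, as usual, to carry the measure of the level immediately beneath it, an induction on $k$ shows that every level of $C_k$ has measure of the form $\lambda^{m}(1+\lambda)^{-k}$ for some integer $m\ge 0$; in particular the base $C_k(0)$ has measure $(1+\lambda)^{-k}$ and the top level has measure $\lambda^{k}(1+\lambda)^{-k}$.

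Next I would identify the Radon--Nikodym cocycle. For $x$ in a level $C_k(i)$ with $i<h_k-1$ one has $\omega_1(x) = \mu(C_k(i+1))/\mu(C_k(i))$, and a short check that refining $C_k$ to $C_{k+1}$ (splitting each level into its left and right parts) leaves this ratio unchanged shows the value is well defined for a.e.\ $x$. By the previous paragraph this ratio is $\lambda^{m'-m}$, an integer power of $\lambda$, so $\omega_1(x)\in\{\lambda^m : m\in\mathbb{Z}\}$ a.e.\ and hence $\omega_n(x)=\prod_{j=0}^{n-1}\omega_1(S^j x)\in\{\lambda^m : m\in\mathbb{Z}\}$ for every $n$ and a.e.\ $x$. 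This gives the upper bound $r(S_\lambda)\setminus\{0\}\subseteq\{\lambda^m : m\in\mathbb{Z}\}$: the set $\{\lambda^m : m\in\mathbb{Z}\}$ is discrete in $(0,\infty)$, so if $t\ne 0$ lies in the ratio set then, taking $\epsilon$ small in the defining condition forces $B_\epsilon(t)$ to meet $\{\lambda^m\}$, whence $t=\lambda^{m_0}$ for some $m_0$.

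For the reverse inclusion it suffices, since $r(S_\lambda)\setminus\{0\}$ is known to be a multiplicative group (see \cite{DaSi09}), to prove that $\lambda\in r(S_\lambda)$. The key structural fact is that $S^{h_N}$ carries the left half of each level of $C_N$ onto its right half: in passing from $C_N$ to $C_{N+1}$ the left subcolumn $C_{N,0}'$ has height exactly $h_N$ and sits directly below the right subcolumn $C_{N,1}'$, so $S^{h_N}$ sends the level equal to the left part of $C_N(i)$ to the level equal to its right part, with constant Jacobian $\mu(\text{right part})/\mu(\text{left part})=\lambda$; thus $\omega_{h_N}\equiv\lambda$ on every such left half. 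Given $A$ with $\mu(A)>0$ and $\epsilon>0$, I would use the level-approximation property quoted in the Preliminaries to find $N$ and a level $L=C_N(i)$ with $\mu(A\cap L)>\tfrac{9}{10}\mu(L)$; as the two halves of $L$ are then mostly contained in $A$ and are interchanged by $S^{h_N}$ with Jacobian $\lambda$, a positive-measure subset of $A\cap L$ returns into $A$ under $S^{h_N}$ with $\omega_{h_N}=\lambda$, establishing $\lambda\in r(S_\lambda)$.

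Combining the inclusions, $r(S_\lambda)\setminus\{0\}$ is a multiplicative group containing $\lambda$ and contained in $\{\lambda^m : m\in\mathbb{Z}\}$, hence equals $\{\lambda^n : n\in\mathbb{Z}\}$; since the ratio set is closed and $\lambda^n\to 0$, we also get $0\in r(S_\lambda)$, so $r(S_\lambda)=\{\lambda^n : n\in\mathbb{Z}\}\cup\{0\}$ and $S_\lambda$ is type $\mathrm{III}_\lambda$. I expect the main obstacle to be the bookkeeping behind the key structural fact together with the approximation step: verifying that the cocycle is genuinely constant (equal to $\lambda$) along the return $S^{h_N}$ on each left half, and then controlling the $\epsilon$-errors so that a set of positive measure both starts and lands in $A$. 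The level and spacer measure computations, though routine, must be done carefully to guarantee that $\omega_n$ never leaves the discrete set $\{\lambda^m\}$, which is precisely what excludes types $\mathrm{III}_0$ and $\mathrm{III}_1$.
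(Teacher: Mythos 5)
Your proposal is correct and follows essentially the same route as the paper: the Radon--Nikodym cocycle takes values in $\{\lambda^m : m\in\mathbb{Z}\}$, forcing $r(S_\lambda)\setminus\{0\}\subseteq\{\lambda^m : m\in\mathbb{Z}\}$, and $\lambda\in r(S_\lambda)$ is obtained by finding a level almost full of $A$ and using that $S^{h_N}$ maps its left part onto its right part with constant Jacobian $\lambda$, followed by a pigeonhole argument. The only caveat is that your fixed threshold $\tfrac{9}{10}$ should be replaced by $1-\epsilon$ with $\epsilon$ small depending on $\lambda$ (for small $\lambda$ the right sublevel occupies only a $\tfrac{\lambda}{1+\lambda}$ fraction of the level, so a $\tfrac{9}{10}$-fullness bound gives no information about $A$ there), which is exactly how the paper runs the pigeonhole step.
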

\begin{proof} Write $S=S_\lambda$.
On a level set, the Radon-Nikodym derivative $\frac{d\mu \circ S}{d\mu}$ on that set is the amount of expansion or contraction of the interval when it is mapped to the level above it. One can verify that the factor of expansion or contraction is always a power of $\lambda$, so the ratio set $r(S)$ is a subset of $\{\lambda^n\} \cup \{0\}$. It thus suffices to show that $r(S)$ contains $\lambda$. Given a measurable set $A$ with positive measure, for $\epsilon > 0$ small, pick a level $I$ on $C_k$ such that $\mu(A \cap I) \ge (1 - \epsilon)\mu(I)$. Then on the left sublevel $J$ of $I$, $\mu(A \cap J) \ge (1 - \epsilon(1 + \lambda)) \mu(J)$. It thus follows that
$$\mu(S^{h_k} A \cap (I \setminus J)) \ge \lambda (1 - \epsilon(1 + \lambda)) \mu(J) = (1 - \epsilon(1 + \lambda)) \mu(I \setminus J)$$
as $I \setminus J$ is the right subinterval of $I$. In addition, we have $\mu(A \cap (I \setminus J)) \ge (1 - \epsilon(1 + \lambda))\mu(I \setminus J)$. For $\epsilon$ sufficiently small, it follows that $(1 - \epsilon(1 + \lambda)) + (1 - \epsilon(1 + \lambda)) \ge 1$, so by the pigeonhole principle, $\mu(S^{h_k}A \cap A) > 0$. On $J$, $\frac{d\mu \circ S^{h_k}}{d\mu} = \lambda$, so it follows that the system is type $\text{III}_\lambda$.  The proof of ergodicity is as in \cite[4.2.1]{RuSi89}.
\end{proof}

We adapt the argument in \cite[Theorem 1.5]{AdFrSi97} to the nonsingular setting to  show that $(X, S, \mathcal{B}, \mu)$ is in fact not weakly doubly ergodic; this argument was used to prove that the infinite measure-preserving  HK(+1) transformation is not doubly ergodic but it also yields that is is not weakly doubly ergodic.
\begin{thm}
Let $0<\lambda <1$ and let $S$ be the  HK$(+1,\lambda)$ nonsingular transformation. 
There exists subsets $A$ and $B$ of $X$ of positive measure such that for all $n\in\mathbb Z$,
$$\mu \times \mu((S \times S)^n(A \times A) \cap (A \times B)) = 0.$$
\end{thm}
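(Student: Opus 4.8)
The plan is to exploit the tower structure directly: I would choose $A$ and $B$ to be two distinct levels of a fixed column and show that the set of times at which $A$ returns to itself is disjoint from the set of times at which $A$ meets $B$. First I would record the elementary reformulation
\[
\mu\times\mu\big((S\times S)^n(A\times A)\cap(A\times B)\big)=\mu(S^nA\cap A)\,\mu(S^nA\cap B),
\]
valid because $(S\times S)^n(A\times A)=S^nA\times S^nA$, so that it suffices to produce $A,B$ of positive measure for which, for every $n\in\mathbb{Z}$, at least one of the two factors $\mu(S^nA\cap A)$, $\mu(S^nA\cap B)$ vanishes.

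Next I would fix a column index $N\geq 1$ (so that $C_N$ has at least two levels, since $h_N\geq 5$) and set $A=C_N(i)$, $B=C_N(j)$ for two distinct levels $i\neq j$; these are intervals of positive finite measure. The combinatorial heart is to track descendants. Because the construction has one cut with $s(k,0)=0$ and the $s(k,1)=2h_k+1$ spacers placed above the right subcolumn, a level at height $a$ in $C_k$ splits in $C_{k+1}$ into a left copy at height $a$ and a right copy at height $a+h_k$, while the spacers occupy heights $2h_k,\dots,4h_k$. Consequently, in any later column $C_m$ the descendants of $C_N(\ell)$ occupy exactly the heights $\ell+\sum_{k=N}^{m-1}\epsilon_k h_k$ with $\epsilon_k\in\{0,1\}$. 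I would then argue that, after discarding a null set, almost every point of $A$ eventually lands in the bottom portion of a tall column $C_m$ on which $S^n$ acts as the pure shift by $n$; matching a descendant of $A$ at height $a$ with its image at height $a+n$ (a descendant of $A$ as well) forces
\[
\mu(S^nA\cap A)>0\ \Longrightarrow\ n\in\mathcal{R}:=\Big\{\textstyle\sum_{k\geq N}c_kh_k:\ c_k\in\{-1,0,1\},\ \text{finitely many nonzero}\Big\},
\]
and identically $\mu(S^nA\cap B)>0\Rightarrow n-(j-i)\in\mathcal{R}$.

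Finally I would close the argument with a gap lemma for the height sequence. From $h_0=1$ and $h_{k+1}=4h_k+1$ one gets by induction that $h_K\geq 2\sum_{k=N}^{K-1}h_k+h_N$ for all $K>N$; looking at the top nonzero index, this super-increasing estimate shows that every nonzero element of $\mathcal{R}-\mathcal{R}=\{\sum_{k\geq N}d_kh_k:d_k\in\{-2,\dots,2\}\}$ has absolute value at least $h_N$. Since $0<|j-i|<h_N$, the sets $\mathcal{R}$ and $(j-i)+\mathcal{R}$ are disjoint, so no $n$ can satisfy both implications above, which yields the desired vanishing for every $n$. I expect the main obstacle to be the measure-theoretic justification of the ``pure shift'' step, i.e.\ controlling the null set of points whose $S^n$-orbit wraps around the top or bottom of $C_m$ before resolving: the point is that the descendants of $C_N(\ell)$ all sit below height $\sum_{k=N}^{m-1}h_k\approx 4^m/9$, a fixed fraction (about $1/12$) of the full height $h_m\approx 4^{m+1}/3$, so for $n>0$ the shift never reaches the top, while for $n<0$ one passes to a higher column and uses that the exceptional ``all-left'' descendant has $\mu$-measure tending to $0$. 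The passage from the measure-preserving HK$(+1)$ transformation to the nonsingular HK$(+1,\lambda)$ system is harmless, since the argument uses only which heights the descendants occupy, not their measures, and the unequal cuts merely reweight the pieces while keeping every descendant of positive $\mu$-measure.
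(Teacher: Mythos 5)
Your argument is correct, and it reaches the paper's conclusion by a genuinely different route. The paper fixes $B$ to be the top level of $C_1$ and $A=S^{-1}B$, introduces the cyclic column maps $R_n$, and runs an induction on the hitting sets $I_n(A,L)=\{i: 0\le i<h_n,\ \mu(R_n^iA\cap L)>0\}$, using the recursion $I_{n+1}(A,L)\subset\bigcup_{c}\bigl(I_n(A,L)+c\bigr)$ with $c\in\{0,h_n,2h_n+1,3h_n+1\}$ (the four translates land in pairwise disjoint blocks of $[0,h_{n+1})$, which is what kills the cross terms) to show $I_n(A,A)\cap I_n(A,B)=\emptyset$ for all $n$. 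You instead describe the return times explicitly: the descendants of $C_N(\ell)$ in $C_m$ are exactly the levels at heights $\ell+\sum_{k=N}^{m-1}\epsilon_kh_k$, so positive-measure intersection forces $n\in\mathcal{R}$ and $n-(j-i)\in\mathcal{R}$, and you finish with the super-increasing estimate $h_K\ge 2\sum_{k=N}^{K-1}h_k+h_N$, which shows every nonzero element of $\mathcal{R}-\mathcal{R}$ has modulus at least $h_N>|j-i|$. What your version buys: it works for an arbitrary pair of distinct levels of any column $C_N$ (the paper relies on its specific adjacent-level choice), and it isolates the arithmetic mechanism --- lacunarity of the heights --- in a clean separation lemma; the paper's induction is shorter and needs no quantitative gap bound. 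One reassurance on the step you flag as the main obstacle: there is no null set to control. Each descendant of $C_N(\ell)$ in $C_m$ is an \emph{entire level} of $C_m$ of positive measure (also for the unequal cut ratio $\lambda$), and all descendants sit at height at most $(h_N-1)+\sum_{k=N}^{m-1}h_k$, which is on the order of $h_m/3$ (your stated fraction $1/12$ is off, but only a constant strictly less than $1$ is needed); hence for fixed $n>0$ and $m$ large, $S^n$ carries each such level exactly onto the level $n$ higher, and the case $n<0$ reduces to $n>0$ since $\mu(S^nA\cap A)>0$ if and only if $\mu(A\cap S^{-n}A)>0$ by nonsingular invertibility. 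The paper handles this same wrap-around issue with the maps $R_n$ and the observation that for each $i$ there is $n$ with $S^iA=R_n^iA$.
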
 

\begin{proof}
Let $B$ be the top level of $C_1$ and $A = S^{-1}B$. Let $R_n$ be the transformation on $C_n$ that maps each level to the one above it and maps the top level to the bottom level. Note that $R_n$ agrees with $S$ on $C_n$ except on the top level. For $L = A$ or $L = B$, define
$$I_n(A, L) = \{i: 0 \le i < h_n, \mu(R_n^i A \cap L) > 0\}$$
$$I_n = I_n(A, A) \cap I_n(A, B).$$
We show by induction that $I_n = \emptyset$. Since $I_1(A, B) = \emptyset$, $I_1 = \emptyset$. Assume that $I_n = \emptyset$. Note that $A$ and $B$ are unions of levels in $C_n$. As $A$ is not contained in any of the additional spacers added to the system on the $n$th iteration of the cutting/stacking process (for $n > 1$), $R_{n + 1}^{2h_n}A$ and $R_{n + 1}^{2h_n + 1}A$ are contained in the spacers placed on the right subcolumn of $C_n$. We thus have the following:
$$I_{n + 1}(A, L) \subset I_n(A, L) \cup (I_n(A, L) + h_n) + (I_n(A, L) + 2h_n + 1) + (I_n(A, L) + 3h_n + 1).$$
Hence,
\begin{align*}
I_{n + 1} &= I_{n + 1}(A, A) \cap I_{n + 1}(A, B) \\
&\subset I_n(A, A) \cap I_n(A, B) \\
&\cup (I_n(A, A) + h_n) \cap (I_n(A, B) + h_n) \\
&\cup (I_n(A, A) + 2h_n + 1) \cap (I_n(A, B) + 2h_n + 1) \\
&\cup (I_n(A, A) + 3h_n + 1) \cap (I_n(A, B) + 3h_n + 1) \\
\end{align*}
By induction, all rows are empty. For all $i$, there exists $n > 0$ such that $S^iA = R_n^iA$, so $\mu \times \mu((S \times S)^i(A \times A) \cap (A \times B)) = 0$.
\end{proof}
Hence, there exists $A$ and $B$ of positive measure such that there does not exist $n$ such that $\mu(S^nA \cap A) > 0$ and $\mu(S^nA \cap B) > 0$. This shows that $S$ is not weakly doubly ergodic. Now we show using a similar argument to \thref{th1} that $S$ is EIC.
\begin{thm}
Let $(X, S, \mathcal{B}, \mu)$ denote the nonsingular type $\text{III}_\lambda$ HK$(+1,\lambda)$ transformation. Then there does not exist a nontrivial Borel equivariant map $\phi: (X, S, \mathcal{B}, \mu) \to (Y, T, d)$ where $T$ is an isometry on a separable metric space $Y$.
\end{thm}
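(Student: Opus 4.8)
The plan is to follow the proof of \thref{th1} almost verbatim, replacing the measure-preserving overlap estimate by one that accounts for the Radon--Nikodym cocycle of the uneven cuts. Suppose, for contradiction, that there is a nontrivial Borel equivariant map $\phi\colon (X,S,\mathcal B,\mu)\to (Y,T,d)$, so that $T\circ\phi=\phi\circ S$ $\mu$-a.e. Since $S=S_\lambda$ is ergodic, \thref{dense} furnishes a point $x_0\in X$ whose image $y=\phi(x_0)$ has dense $T$-orbit in a full-measure subset of the image of $\phi$. Fix $\epsilon>0$; it suffices to show $d(Ty,y)<15\epsilon$, since letting $\epsilon\to 0$ gives $Ty=y$, and density of the orbit then forces $\phi$ to be constant a.e., so that no nontrivial equivariant map exists.

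As in \thref{th1}, choose $q\in Y$ so that $A=\{x\in X:d(\phi(x),q)<\epsilon/2\}$ has positive measure. The one essential modification is the choice of level. Because the cuts are uneven, a level $L$ of $C_n$ splits into a left sublevel $L_\ell$ and a right sublevel $L_r$ with $\mu(L_\ell)=\frac{1}{1+\lambda}\mu(L)$ and $\mu(L_r)=\frac{\lambda}{1+\lambda}\mu(L)$, and $S^{h_n}$ maps $L_\ell$ affinely onto $L_r$, contracting measure by the factor $\lambda$ (exactly as recorded in the type $\mathrm{III}_\lambda$ computation above). I would therefore ask for a level $L$ in some column $C_n$ with density $\mu(A\cap L)>d_0\,\mu(L)$, where $d_0$ is chosen strictly above the threshold
\begin{equation*}
1-\frac{\lambda}{(1+\lambda)^2},
\end{equation*}
which specializes to $3/4$ when $\lambda=1$, consistent with the value $9/10$ used in \thref{th1}. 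Such a level exists: the levels of the columns form a refining sequence of partitions generating $\mathcal B$, so by the martingale (Lebesgue density) theorem the density of $A$ along the level containing a density point tends to $1$. With this threshold, bounding $\mu(A\cap L_\ell)$ and $\mu(A\cap L_r)$ below and using $\mu(S^{h_n}(A\cap L_\ell))=\lambda\,\mu(A\cap L_\ell)$, the sets $S^{h_n}(A\cap L_\ell)$ and $A\cap L_r$ both lie in $L_r$ and have total measure exceeding $\mu(L_r)$, forcing $\mu(S^{h_n}A\cap A)>0$. Hence there is $z\in A$ with $S^{h_n}z\in A$, and since $\phi(S^{h_n}z)=T^{h_n}\phi(z)$ with both $\phi(z)$ and $T^{h_n}\phi(z)$ within $\epsilon/2$ of $q$, we get $d(T^{h_n}\phi(z),\phi(z))<\epsilon$.

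From here the argument is identical to \thref{th1}. Choosing $\ell$ with $d(T^\ell y,\phi(z))<\epsilon$ and using that $T$ is an isometry yields $d(T^{h_n}y,y)<3\epsilon$. Moreover, for every $m\ge n$ the chosen high-density level of $C_n$ subdivides in $C_m$ into sublevels whose $A$-densities average to more than $d_0$, so at least one carries density exceeding $d_0$; running the same estimate gives $d(T^{h_m}y,y)<3\epsilon$ for all $m\ge n$. Applying this with $m=n+1$ and the recursion $h_{n+1}=4h_n+1$ gives $d(T^{4h_n+1}y,y)<3\epsilon$, while four applications of $d(T^{h_n}y,y)<3\epsilon$ (shifting by the isometry) give $d(T^{4h_n}y,y)<12\epsilon$; combining these, $d(Ty,y)\le d(T^{4h_n}y,y)+d(T^{4h_n+1}y,y)<15\epsilon$, which closes the argument.

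The one genuinely new step, and the only place the nonsingularity intervenes, is the overlap estimate: I must track the contraction factor $\lambda$ of $S^{h_n}$ on the left sublevel and calibrate the density threshold $d_0$ so that the images still overlap. The existence of arbitrarily high-density levels, and of such levels in every column $C_m$ with $m\ge n$ via the averaging-over-sublevels observation, is the supporting technical point; once these are in hand, the isometry and triangle-inequality computation and the use of $h_{n+1}=4h_n+1$ carry over unchanged from \thref{th1}.
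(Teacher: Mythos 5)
Your proof is correct and follows essentially the same route as the paper's: dense-orbit point from \thref{dense}, a high-density level, an overlap estimate for $S^{h_n}$ forcing $\mu(S^{h_n}A\cap A)>0$, and the same triangle-inequality cascade using $h_{n+1}=4h_n+1$ to get $d(Ty,y)<15\epsilon$. Your explicit threshold $d_0>1-\lambda/(1+\lambda)^2$, derived by tracking the contraction factor $\lambda$ on the left sublevel, is a sharper and more carefully justified version of the paper's choice of density $1-\tfrac{1}{1000(\lambda+1)}$; otherwise the two arguments coincide.
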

\begin{proof}
By \thref{dense}, there exists $y$ in the image of $\phi$ that has a dense orbit under $T$ in the image of $\phi$ in $Y$. There exists a point $q \in Y$ such that the set
\[A = \{x \in X: d(\phi(x), q) < \frac{\epsilon}{2} \}\]
has positive measure. Let $L$ be a level in column $n$ for which $\mu(A \cap L) > (1 - \frac{1}{1000(\lambda + 1)})\mu(L)$. Since $\mu(S^{h_n}L \cap L) \ge \frac{1}{\lambda + 1}\mu(L)$, we must have $\mu(S^{h_n}A \cap A) > 0$. Thus, there exists some $z \in A$ such that $S^{h_n}z\in A$, so \[d(T^{h_n}(\phi(z)), \phi(z)) \le d(\phi(z), q) + d(T^{h_n}(\phi(z)), q) < \epsilon.\] Next, choose $\ell$ such that 
\[
d(T^\ell(y), \phi(z)) < \epsilon.\] We then have 
\begin{align*}
d(T^{h_n} y, y) = d(T^{h_n + \ell}y, T^\ell y) &\le d(T^{h_n + \ell}y, T^{h_n}\phi(z)) + d(T^{h_n}\phi(z), \phi(z)) + d(\phi(z), T^\ell y)\\
 &< 3\epsilon.\end{align*}  Now for each  $m \ge n$ there exists a level $L'$ in $C_m$  such that $\mu(A \cap L') > (1 - \frac{1}{1000(\lambda + 1)})\mu(L')$. As a result, we must have $d(T^{h_m}y, y) < 3\epsilon$ for all $m \ge n$. Then we have $d(T^{4h_n + 1}y, y) < 3\epsilon$, so by the triangle inequality, we have
\[d(T^{4h_n}y, y) \le d(T^{h_n}y, y) + d(T^{2h_n}y, T^{h_n}y) + d(T^{3h_n}y, T^{2h_n}y) + d(T^{4h_n}y, T^{3h_n}y) < 12\epsilon.\]
So
\[d(Ty, y) \le d(T^{4h_n}y, y) + d(T^{4h_n + 1}y, y) < 15\epsilon.\]
This proves that $y$ is a fixed point under $T$,  so the  map is trivial. Thus the type $\text{III}_\lambda$ $4h_k + 1$ transformation is EIC.
\end{proof}

One can also obtain type $\text{III}_1$ examples. In our construction, at even times of the inductive construction,   choose a fixed $\lambda_1$ and at odd times choose a  $\lambda_2$ such that $\log(\lambda_1)/\log(\lambda_2)$ is irrational; this results  in a 
type $\text{III}_1$ transformation by a standard argument (see \cite{DaSi09}), and our arguments can also be adapted to show these transformations are EIC and not WDE.

\section{WM and EIC}\label{rigidity}
As we have mentioned, in \cite{GlWe16}, Glasner and Weiss asked whether there exists a nonsingular (or infinite measure)  transformation that is WM but not EIC. In this section we consider results inspired by this question. \\\\
In this section, we first show that transformations admitting isometric factors to locally compact metric spaces are not weakly mixing. Next, we will show that topological rigidity does not imply measure theoretic rigidity, ruling out an approach to proving the equivalence of non-equivalence of EIC and WM by only using topological rigidity. We will then construct an isometric factor for a family of tower transformations. We will then prove using either \thref{locallycompact} or the equivalence of EUC and WM that a subset of those tower transformations are not weakly mixing. Finally, we'll use that result and \cite{AaNa87} and \cite[Chapter 15]{Na98} to deduce that there exists irrational rotations satisfying certain rigidity conditions. A similar analysis of the latter two processes described can be found in \cite{AARONSON_2013} and \cite{BERGELSON_2013}. 

Definition \ref{D:lceic}, as well as Proposition~\ref{locallycompact}  were  introduced in unpublished undergraduate thesis \cite{Bea} for the case of sigma-finite subinvariant transformations (that include infinite measure-preserving transformations).  We state the proposition for  the setting of nonsingular transformations  and include a proof   essentially following the ideas in \cite{Bea}.

\begin{defn}\label{D:lceic}
A nonsingular system $(X, \mu, T, \mathcal{B})$ is \textbf{Locally Compact Ergodic with Isometric Coefficients (LCEIC)} if 
for any locally compact  metric space $(Y,d)$ and $S : Y \to Y$ an invertible isometry,  for any equivariant Borel  map  $\phi : (X, \nu,T) \to Y$,  i.e., $S \circ \phi = \phi \circ T$ $\mu$-a.e., the image of almost all points in $X$ under $\phi$ is a single point., i.e., the map $\phi$ is constant a.e. 
\end{defn}

\begin{prop}\thlabel{locallycompact} 
For  nonsingular transformations, LCEIC is equivalent to WM.
\end{prop}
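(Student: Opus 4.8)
The plan is to prove the two implications separately, using the Aaronson--Lin--Weiss spectral characterization of weak mixing (the theorem recalled in Section~2) together with \thref{dense}.

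For the direction LCEIC $\Rightarrow$ WM I would argue contrapositively. Suppose $T$ is not weakly mixing. By the spectral characterization there is a nonconstant $f\in L^\infty(X,\mathbb C)$ and $\lambda\in\mathbb C$ with $f\circ T=\lambda f$ a.e. Applying this to $|f|$ gives $|f|\circ T^n=|\lambda|^n|f|$; since $f$ is bounded and not a.e.\ zero, letting $n\to\pm\infty$ forces $|\lambda|=1$ (otherwise $|f|\circ T^n$ would be unbounded on a positive-measure set). Then $S\colon\mathbb C\to\mathbb C$, $S(w)=\lambda w$, is an invertible isometry of the locally compact metric space $\mathbb C$, and $\phi:=f$ satisfies $S\circ\phi=\lambda f=f\circ T=\phi\circ T$. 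As $f$ is nonconstant, this contradicts LCEIC, proving LCEIC $\Rightarrow$ WM.

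For the converse WM $\Rightarrow$ LCEIC, let $\phi\colon X\to Y$ be a Borel equivariant map into a locally compact metric space $(Y,d)$ with $S$ an invertible isometry, and recall that WM implies ergodicity, hence conservativity. The key observation is that the pushforward $m:=\phi_*\nu$ gives a factor $(Y,m,S)$ that is again conservative and ergodic: if $m(W)>0$ then $\nu(\phi^{-1}W)>0$, conservativity of $T$ yields $n>0$ with $\nu(\phi^{-1}W\cap T^{-n}\phi^{-1}W)>0$, and equivariance turns this into $m(W\cap S^{-n}W)>0$; the analogous argument with invariant sets gives ergodicity. I would then upgrade conservativity into a \emph{topological} statement via the isometry property: by recurrence, for $m$-a.e.\ $y'$ and every $\varepsilon>0$ there is $N>0$ with $d(S^Ny',y')<\varepsilon$, and since $S$ is an isometry each $S^jy'$ then lies within $\varepsilon$ of the finite set $\{y',\dots,S^{N-1}y'\}$, so the orbit is totally bounded; covering that finite set by the precompact balls furnished by local compactness shows the orbit is precompact. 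Since ``having precompact orbit'' is an $S$-invariant property holding on a conull set, ergodicity of the factor lets me conclude that $m$ is supported, mod null, on a compact $S$-invariant set. Using \thref{dense} to choose $y=\phi(x)$ with orbit dense in the (conull) image, I take $K:=\overline{\{S^ny\}}$, which is then compact.

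Finally I would exploit the structure of $(K,S)$: as a transitive, equicontinuous (isometric) system on a compact metric space it is minimal, hence by the Halmos--von Neumann structure theorem conjugate to a minimal rotation $\xi\mapsto\xi+a$ on a compact monothetic abelian group $K$. For each character $\chi\in\widehat K$ the bounded function $f_\chi:=\chi\circ\phi$ satisfies $f_\chi\circ T=\chi(a)f_\chi$, so by the spectral characterization of WM it is a.e.\ constant; since characters separate points of $K$ and generate its Borel $\sigma$-algebra, $\phi_*\nu$ is a point mass, i.e.\ $\phi$ is constant a.e., giving LCEIC. The main obstacle is the middle step, the passage from measure-theoretic conservativity to compactness of the orbit closure: this is exactly where local compactness of $Y$ is indispensable, and it is the reason the argument establishes the (in general open) implication WM $\Rightarrow$ EIC only under the local compactness restriction.
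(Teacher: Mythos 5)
Your LCEIC $\Rightarrow$ WM direction is essentially the paper's argument (the paper normalizes the eigenfunction to take values in $\mathbb S^1$ rather than working with the multiplication $w\mapsto\lambda w$ on $\mathbb C$), and it is fine. The converse direction, however, has a genuine gap at the step where you pass from conservativity of the factor to compactness of the orbit closure. From $d(S^N y',y')<\varepsilon$ and the isometry property you can only conclude, writing $j=qN+r$ with $0\le r<N$, that $d(S^{j}y',S^{r}y')=d(S^{qN}y',y')\le q\, d(S^Ny',y')<q\varepsilon$; the error accumulates with $q$, so the orbit need \emph{not} lie within $\varepsilon$ of the finite set $\{y',\dots,S^{N-1}y'\}$. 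Mere recurrence of a point under an isometry does not give a totally bounded orbit: what is needed is that the set of $\varepsilon$-return times be syndetic, not merely infinite. This is exactly the difficulty that \thref{Compact} is designed to overcome, and it does so only under the extra hypotheses $\sum_i d(S^{q_i}y,y)<\infty$ and $q_{i+1}/q_i\le K$; the Section~\ref{rigidity} example of a rigid (hence highly recurrent) isometric orbit whose closure is not compact shows the implication you assert is false in general.

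The statement you are aiming for can be rescued in the locally compact setting, but not by this route. The paper's proof avoids compactness altogether: using \thref{dense} it endows the orbit closure of the transitive point with the structure of a locally compact abelian monothetic group on which $S$ acts by rotation, and then applies Pontryagin duality to produce a nontrivial character, hence a nonconstant eigenfunction of $T$, contradicting WM. (Alternatively, once that group structure is in place, Weil's dichotomy for monothetic locally compact groups --- the closure of the cyclic group generated by the rotating element is either compact or infinite cyclic and discrete --- combined with conservativity of the pushforward factor would exclude the discrete case and recover the compact $K$ you want, after which your Halmos--von Neumann/character argument is sound.) As written, though, the totally-bounded claim fails and the proof of WM $\Rightarrow$ LCEIC is incomplete.
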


\begin{proof} We will first show that WM implies LCEIC. Let $(X, \nu,T, \mathcal{B})$ be  a nonsingular transformation that is WM. From the definition of WM it follows that every $T$-invariant  L$^\infty$ function is constant a.e., so $T$ is ergodic. Let  $(Y, d, S)$ denote an invertible isometry on a locally compact metric space. Let $f: X \to Y$ be a Borel equivariant map.  Assume that there does not exist one point $p \in Y$ such that $f^{-1}(p)$ is of  full measure. By \thref{dense}, we may assume that there exists a point $y \in Y$ with a dense orbit on an invariant set of full measure. 
For each $a \in Y$, there exists a sequence $n_{i, a}$ such that 
$$\lim_{i \to \infty} S^{n_{i, a}}(y) = a$$
We define a group operation 
$$a \cdot b = \lim_{i \to \infty} S^{n_{i, a} + n_{i, b}}(y)$$
It is clear that the limit exists. To check that the operation is well defined, if there were two sequences $m_{i, a}$ and $n_{i, a}$ such that $a = \lim_{i \to \infty} S^{n_{i, a}}y = \lim_{i \to \infty} S^{m_{i, a}}(y)$, then letting 
$$e = \lim_{i \to \infty} S^{n_{i, a} - m_{i, a}}y$$
we can check that    
$$a \cdot e = a.$$
This implies that $e$ is the identity. Thus, we have that $S$ is a rotation on a locally compact abelian group $Y$. Thus, by Pontryagin duality, there exists a character $\chi$ such that $\chi(Sy) \neq 1$. In addition, note that $\chi(Sa) = \chi(Sy \cdot a) = \chi(Sy) \cdot \chi(a)$ for each $a \in Y$. As $\chi$ is continuous, $\chi$ is measurable with respect to $\nu$. Hence $\chi(Sy)$ is a nontrivial eigenvalue of $S$ with eigenfunction $\chi$. Let $g = \chi \circ f$. Then letting $\lambda = \chi(Sy)$, we have
$$\lambda g = \lambda \chi \circ f = \chi \circ S \circ f = \chi \circ f \circ T = g \circ T$$ 
Hence, $\lambda$ is an eigenvalue for $T$ and $T$ is not WM, a contradiction. \\\\

Now we will show that LCEIC implies WM. Suppose $(X, \nu, T, \mathcal{B})$ is a nonsingular system that is not weakly mixing. Then there exists a Borel eigenfunction $f: X \to \mathbb S^1$ with $f \circ T = \lambda f$ pointwise almost everywhere for some $\lambda \in \mathbb S^1$. Letting $(\mathbb S^1, d, S)$ denote the rotation by $\lambda$, it follows that $f: X \to \mathbb S^1$ is a Borel equivariant map to a compact isometry. Hence, $(X, \nu, T, \mathcal{B})$ is not LCEIC.
\end{proof}
Let $(M, \rho)$ be a complete separable metric space and $T: M \to M$ a homeomorphism. If $(q_i)$ is a topological rigidity sequence for $T$, then it is well known that for any finite measure $\mu$ such that $\mu T = \mu$, $(q_i)$ is a measurable rigidity sequence for $T$. If $M$ is compact, then there exists a finite invariant measure, but in general, $T$ need not admit a finite invariant measure. Thus, if one finds a topological dynamical system $(M, T, \rho)$ that admits a topological rigidity sequence $(q_i)$ that is not a rigidity sequence for an irrational rotation, $T$ could still fail to be weakly mixing for any nonsingular measure $\nu$ on $M$. The below example illustrates that fact.
\begin{thm}
There exists a separable metric space $(Y, d)$ and an isometry $S$ on $Y$ such that $S$ is topologically uniformly rigid with respect to $(n_i)$ but for any finite Borel measure $\nu$ on $Y$ which is ergodic and nonsingular with respect to $S$, $(Y, S, d)$ is not measure theoretically rigid with respect to $(n_i)$.
\end{thm}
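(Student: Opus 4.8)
The plan is to produce an isometry $S$ of a separable metric space $Y$, necessarily non-compact (and non-locally-compact, in the spirit of Proposition~\ref{locallycompact}), together with a sequence $(n_i)$ for which $\lim_i\sup_{y\in Y}d(S^{n_i}y,y)=0$, yet no ergodic nonsingular finite measure is rigid along $(n_i)$. The guiding principle is a decoupling between \emph{continuous} and merely \emph{measurable} eigenvalues: if $Y$ were compact and $\nu$ were equivalent to an $S$-invariant measure, then uniform topological rigidity would force measure-theoretic rigidity, since continuous functions are dense in $L^2(\nu)$ and the composition operators $f\mapsto f\circ S^{n_i}$ are unitary, so $f\circ S^{n_i}\to f$ in $L^2(\nu)$ for every $f$. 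The example must therefore be arranged so that the relevant ergodic nonsingular measures carry a measurable eigenvalue $e^{2\pi i\alpha}$ that is \emph{not} a continuous one; this is exactly what lets $S^{n_i}\to\mathrm{id}$ hold uniformly (the witnessing eigenfunction being discontinuous, the topology does not ``see'' $\alpha$) while $n_i\alpha\not\to0$ breaks measure rigidity.

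First I would record the elementary implication that drives everything. Suppose $(Y,\nu,S)$ is measure-theoretically rigid along $(n_i)$, i.e. $\nu(S^{n_i}A\triangle A)\to 0$ for all measurable $A$, and let $f$ be a unimodular measurable eigenfunction, $f\circ S=\lambda f$. Approximating $f$ by simple functions and using set rigidity gives $f\circ S^{n_i}\to f$ in measure; since $|f|=1$ this forces $|\lambda^{n_i}-1|\to 0$, so $\lambda^{n_i}\to 1$, i.e. $n_i\alpha\to 0\pmod 1$ when $\lambda=e^{2\pi i\alpha}$. Thus measure rigidity along $(n_i)$ makes $(n_i)$ a rigidity sequence for the rotation by $\alpha$. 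The proof then reduces to three tasks: (i) build $(Y,S)$ uniformly topologically rigid along $(n_i)$; (ii) guarantee that every ergodic nonsingular finite measure $\nu$ admits a nontrivial measurable eigenvalue $e^{2\pi i\alpha}$ with $\alpha$ irrational; and (iii) ensure $(n_i)$ is \emph{not} a rigidity sequence for that rotation, i.e. $n_i\alpha\not\to 0$. Combining the displayed implication with (ii) and (iii) contradicts rigidity for every admissible $\nu$.

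For the construction I would take $(Y,S)$ to be the isometric factor attached to the family of tower transformations studied in the previous sections, reading off both the measurable eigenvalue $e^{2\pi i\alpha}$ and the candidate rigidity sequence $(n_i)=(h_{k_i})$ from the tower heights. Uniform topological rigidity in (i) is obtained by choosing the subsequence of heights so that $S^{h_{k_i}}$ moves every point of $Y$ uniformly little; using \thref{dense} one always has a dense-orbit point on a full-measure invariant set, which organizes the displacement estimate. The crucial separation in (ii)--(iii)---an irrational $\alpha$ realized measurably but not continuously, for which $(n_i)$ is a uniform topological rigidity sequence yet fails to be a rotation rigidity sequence---is precisely the conclusion I would extract from \cite{AaNa87} and \cite[Chapter~15]{Na98}, invoked as a black box producing ``irrational rotations satisfying certain rigidity conditions.'' Ergodicity then propagates $\alpha$ to every $\nu$: on any $S$-orbit the eigenfunction takes values $\{e^{2\pi i(\theta_0+n\alpha)}\}$, which is dense since $\alpha$ is irrational, so the eigenfunction is nonconstant on the (orbit-closure) support of each ergodic $\nu$ and $e^{2\pi i\alpha}$ is a genuine measurable eigenvalue for $(Y,\nu,S)$.

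The main obstacle is twofold, and it is where the real work lies. First one must honestly establish the decoupling: uniform topological rigidity only yields $\nu\circ S^{-n_i}\to\nu$ weakly (dominated convergence on bounded continuous functions), and one must check this does \emph{not} upgrade to $\nu(S^{n_i}A\triangle A)\to 0$, which is exactly why $Y$ cannot be compact and why the eigenfunction for $\alpha$ must be discontinuous---if it were continuous, continuity at the dense-orbit point would force $e^{2\pi i n_i\alpha}\to 1$, contradicting (iii). Second, and most delicate, the statement quantifies over \emph{all} ergodic nonsingular finite $\nu$, so step (ii) must hold uniformly; in particular one must rule out exotic quasi-invariant measures realizing $(Y,\nu,S)$ as a weakly mixing, continuous-spectrum system that is rigid but has no eigenvalue to contradict. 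Controlling the possible spectral types of every such $\nu$ from the isometry structure and the specific orbit-closure geometry of the constructed $Y$ is the crux.
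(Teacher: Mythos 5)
Your logical skeleton is right and matches the paper's final step: if $\nu$ were rigid along $(n_i)$ and $f$ were a unimodular eigenfunction with eigenvalue $\lambda\neq 1$, then $\|f\circ S^{n_i}-f\|_{L^2(\nu)}=\|f\|_{L^2(\nu)}|\lambda^{n_i}-1|\to 0$ would force $\lambda^{n_i}\to 1$. But the two load-bearing inputs are missing, and the construction you sketch points in the wrong direction. First, you never actually produce the sequence $(n_i)$ or the space $Y$. The paper takes from \cite{FaKa} a sequence $(n_i)$ that is simultaneously a rigidity sequence for an invertible \emph{weakly mixing} probability-preserving $(X,T,\mu)$ and satisfies $\gamma^{n_i}\not\to 1$ for \emph{every} $\gamma\neq 1$; then $Y$ is the completion of the orbit $\{T^nA\}$ of a non-invariant set $A$ in the metric $d(B,C)=\sqrt{\mu(B\triangle C)}$, with $S$ the shift. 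Uniform rigidity of $S$ along $(n_i)$ is then inherited from measure rigidity of $T$ at $A$ plus the isometry property. Your alternative --- reading $(n_i)$ off tower heights $(h_{k_i})$ and invoking Aaronson--Nadkarni --- works against you: by \thref{AaNa} and \thref{Eigencoro}, heights with bounded ratios force the existence of $\lambda$ with $\sum|1-\lambda^{h_i}|^2<\infty$, i.e.\ they tend to \emph{be} rotation rigidity sequences, which is exactly what requirement (iii) must exclude. The whole point of the $\gamma^{n_i}\not\to 1$ hypothesis is that you need not identify any particular eigenvalue $\alpha$: any nontrivial eigenvalue whatsoever yields the contradiction.

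Second, you correctly flag as ``the crux'' the need to show that \emph{every} ergodic nonsingular finite measure $\nu$ on $Y$ admits a nontrivial eigenvalue, but you leave it unresolved, and this is precisely where the paper's choice of $Y$ does the work. Because $Y$ is by construction an isometrically embedded closed subset of the Hilbert space $L^2(X,\mu)$ (via $T^nA\mapsto \mathbf{1}_{T^nA}$), the inclusion is a nontrivial equivariant Borel map from $(Y,\nu,S)$ into an isometry of a Hilbert space for any such $\nu$; hence $(Y,\nu,S)$ fails to be ergodic with unitary coefficients, hence is not weakly mixing, hence has a nontrivial $L^\infty$ eigenfunction. This argument is uniform in $\nu$ because the Hilbert-space embedding is a property of $Y$ itself, not of the measure. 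Without some device of this kind your step (ii) cannot be closed --- an abstract uniformly rigid isometry could well support an ergodic nonsingular measure with continuous spectrum, and then there is no eigenvalue to contradict. So the proposal, as written, has a genuine gap at exactly the point you identify, and the intended source of the example would need to be replaced by the weakly-mixing-rigid construction above.
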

\begin{proof}
In \cite{FaKa}, it is shown that there exists a sequence of integers $(n_i)$ such that for any $\gamma \neq 1$, $\gamma^{n_i}$ does not tend to $1$ and that $n_i$ are rigidity times for an invertible measure preserving weakly mixing transformation $(X, T, \mu, \mathcal{B})$ (invertibility isn't shown explicitely in \cite{FaKa} but it follows from the construction being a Gaussian measure space construction). Let $A$ be a non-invariant positive measure subset of $X$. Then as $T$ is rigid with respect to $n_i$
$$\mu(T^{n_i}A \triangle A) \to 0.$$
Let $X_1 = \{T^nA: n \in \mathbb{Z}\}$. Define a metric $d$ on $X_1$ with $d(B, C) = \sqrt{\mu(B \triangle C)}$. Let $Y$ be completion of $X_1$ with respect to $d$. Let $S: X_1 \to X_1$ denote the transformation that takes $T^nA$ to $T^{n + 1}A$. Since 
$$d(S(T^{n}(A)), S(T^{m}(A))) = d(T^{n + 1}(A), T^{m + 1}(A)) = \sqrt{\mu(T^{n + 1}(A) \triangle T^{m + 1}(A))}$$
$$= \sqrt{\mu(T^nA \triangle T^mA)} = d(T^nA, T^mA),$$
it follows that $S$ is an isometry, so $S$ extends to a map on $Y$. Since $T$ is weakly mixing, it is in particular totally ergodic. The metric space $(X_1,d)$ has no isolated points so $Y$ is a complete metric space with no isolated points and by the Baire category theorem it is uncountable. We have that  $Y$ is uncountable with $A$ having dense orbit under $S$ and $(Y, S, d)$ is a topological dynamical system with $S$ an isometry. \\\\
We now show that $(Y, S, d)$ is uniformly rigid with respect to $(n_i)$. To do this, let $B$ be an element of $Y$. As $\{T^nA\}$ is dense in $Y$, there exists $\ell$ such that $d(T^\ell A, B) < \frac{\epsilon}{3}$. Next, pick $i$ large enough so that $d(T^{n_i}A, A) < \frac{\epsilon}{3}$. Then,
$$d(T^{n_i}B, B) \le d(T^{n_i + \ell}A, T^{n_i}B) + d(T^{n_i + \ell}A, T^{\ell}A) + d(T^\ell A, B) < \epsilon$$
so $T^{n_i}B \to B$ with respect to $d$. \\\\
There is another interpretation for the set $Y$. Observe that $X_1$ equipped with $d$ can be viewed as a subspace of $L^2(X, \mu)$ via the map $T^nA \mapsto 1_{T^nA}$. As $L^2(X, \mu)$ is complete and $X_1$ shares the same metric as the $L^2$ metric, $Y$ can be viewed as a closed subspace of $L^2(X, \mu)$. \\\\
%If $(f_k)$ is a sequence of functions that converge in $L^2(X, \mu)$ to $f$, there exists a subsequence $n_k$ such that $(f_{n_k})$ converges pointwise almost everywhere to $f$. Hence, $Y$ in $L^2(X, \mu)$ is comprised of indicator functions of measurable sets. \\\\
Suppose there exists a finite nonsingular ergodic measure $\nu$ on $Y$ that is measure theoretically rigid with respect to $(n_i)$. Let $\mathcal{B}(Y)$ denote the Borel subsets of $Y$. As there exists a continuous isometry between $Y$ and a Hilbert space, namely $L^2(X, \mu)$, $(Y, \nu, S, \mathcal{B})$ is not ergodic with unitary coefficients. Hence, $S$ is not weakly mixing. Hence, there exists an eigenfunction $f \in L^\infty(Y, \nu)$ with eigenvalue $1 \neq \lambda \in \mathbb{C}^* \cong S^1$ with $f \circ S = \lambda f$. On the other hand, if $(n_i)$ is a measure theoretic rigidity sequence for $(S, Y, d)$, $\|f \circ S^{n_i} - f\|_{L^2(Y, \nu)} \to 0$. But
$$\|f \circ S^{n_i} - f \|_{L^2(Y, \nu)} = \|f\|_{L^2(Y, \nu)}|\lambda^{n_i} - 1| \to 0.$$
But as $n_i$ is not a rigidity sequence for any rotation, $|\lambda^{n_i} - 1|$ cannot possibly go to $0$. Hence, there does not exist a finite nonsingular ergodic measure on $Y$ that is rigid with respect to $S$ along the sequence $(n_i)$.
\end{proof}
\begin{rem}
The system $(Y, S)$ does not admit any invariant Borel probability measure $\eta$ as on any invariant probability measure $\eta$, uniform rigidity implies measure theoretic rigidity, and because ergodic measures are extreme points in the space of probability invariant measures. As any homeomorphism on a compact space admits an ergodic invariant measure, $Y$ is not a compact metric space.
\end{rem}
Another possible candidate of WM systems that are not EIC are rank-one transformations since the spectrum of rank-one transformations are somewhat understood by the results of Aaronson and Nadkarni in \cite{AaNa87} and \cite[Chapter 15]{Na98}:
\begin{thm}\thlabel{AaNa}\cite[Corollary 15.57]{Na98}
Let $T$ be a rank-one $\sigma$-finite measure preserving tower transformation with stack heights $h_i$. Then the spectrum of $T$ contains all complex numbers $\lambda$ satisfying
$$\sum_{i = 1}^\infty |\lambda^{h_i} - 1|^2 < \infty.$$
\end{thm}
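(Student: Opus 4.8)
The plan is to construct, for each $\lambda$ with $\sum_i |\lambda^{h_i}-1|^2<\infty$, a measurable unimodular eigenfunction $f$ with $f\circ T=\lambda f$; by the Aaronson--Lin--Weiss criterion quoted earlier this is exactly what it means for the spectrum to contain $\lambda$. Write $s_k=s(k,1)$ for the number of spacers placed at stage $k$, so that for a tower transformation $h_{k+1}=2h_k+s_k$. On the finitely many levels of $C_k$ define the approximate eigenfunction $f_k$ by $f_k=\lambda^{j}$ on the $j$-th level $C_k(j)$; then $|f_k|=1$ and $f_k\circ T=\lambda f_k$ everywhere on $C_k$ except on its top level. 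Fix a bounded reference region $R=C_{k_0}$ of finite measure and let $\mathcal{F}_k$ be the $\sigma$-algebra generated by the partition of $R$ into levels of $C_k$ (these refine as $k$ grows), so $f_k$ is $\mathcal{F}_k$-measurable on $R$.

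The main computation is how $f_{k+1}$ relates to $f_k$. Passing from $C_k$ to $C_{k+1}$, a level $C_k(j)$ is cut into a left half that stays at level $j$ and a right half that is lifted to level $j+(h_k+s_k)$ of $C_{k+1}$; hence $f_{k+1}=f_k$ on the left subcolumn and $f_{k+1}=\lambda^{h_k+s_k}f_k$ on the right subcolumn. Averaging the two equal-measure halves gives
\[
\mathbb{E}\!\left[f_{k+1}\mid \mathcal{F}_k\right]=c_k\,f_k, \qquad c_k=\tfrac12\bigl(1+\lambda^{h_k+s_k}\bigr),
\]
so that $M_k:=f_k\prod_{l<k}c_l^{-1}$ is a genuine martingale with respect to $(\mathcal{F}_k)$ on $R$. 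This is the point at which square-summability (rather than summability) becomes the right hypothesis: the left/right cut acts like an independent fair coin at each scale, and the martingale differences are orthogonal, so $L^2$-boundedness controls convergence.

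A direct calculation with $w=\lambda^{h_k+s_k}$, $|w|=1$, gives the clean identities
\[
1-|c_k|^2=\tfrac14\,|\lambda^{h_k+s_k}-1|^2, \qquad
|\lambda^{h_k+s_k}-1|=|\lambda^{h_{k+1}}-\lambda^{h_k}|\le |\lambda^{h_{k+1}}-1|+|\lambda^{h_k}-1|,
\]
using $h_k+s_k=h_{k+1}-h_k$. Consequently $\sum_k(1-|c_k|^2)\le \sum_i|\lambda^{h_i}-1|^2<\infty$, whence $\prod_k|c_k|^2>0$. Since $\|M_k\|_{L^2(R)}^2=\mu(R)\prod_{l<k}|c_l|^{-2}$ stays bounded, the martingale convergence theorem yields a limit $M$ a.e.\ and in $L^2(R)$ with $\|M\|_{L^2(R)}>0$. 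Taking an exhaustion $R\uparrow X$ and a consistency/diagonal argument produces $M$ a.e.\ on all of $X$.

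Finally, because $M_k\circ T=\lambda M_k$ off the top level of $C_k$, and a.e.\ point of $R$ avoids the top level of $C_k$ for all large $k$, the relation $M\circ T=\lambda M$ passes to the a.e.\ limit. Then $|M|$ is $T$-invariant, so by ergodicity of the rank-one transformation $|M|$ is a nonzero constant, and $f:=M/|M|$ is the desired measurable unimodular eigenfunction. I expect the main obstacle to be the convergence step: deducing genuine a.e.\ convergence from mere square-summability $\sum_i|\lambda^{h_i}-1|^2<\infty$ is not possible by naive estimation on the half-measure disagreement sets, and one must exploit the martingale (equivalently, generalized Riesz-product) orthogonality as above; a secondary technical point is the careful bookkeeping needed to confine the top-level defect to a null set when passing the eigenvalue relation to the limit.
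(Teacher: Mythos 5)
Your argument is essentially correct, but it takes a genuinely different route from the paper. The paper does not reprove this statement at all: its proof consists of passing to the isomorphic model of \cite[15.42]{Na98} in which the spacers are added over the first subcolumn rather than the last, identifying the parameters $m_k=2$, $\gamma_{k,1}=h_k-h_{k-1}$, $\gamma_{k,2}=h_k$, and then invoking Corollary 15.57 of \cite{Na98} as a black box. You instead construct the eigenfunction directly by the generalized Riesz-product/martingale method, which amounts to a self-contained proof of the Aaronson--Nadkarni criterion in the special case $r_k=2$. The key steps are all sound: the identity $1-|c_k|^2=\tfrac14|w-1|^2$ for $c_k=\tfrac12(1+w)$ with $|w|=1$; the reduction of $\sum_k(1-|c_k|^2)<\infty$ to $\sum_i|\lambda^{h_i}-1|^2<\infty$; $L^2$-boundedness of the normalized martingale, hence a.e.\ and $L^2$ convergence; a Borel--Cantelli argument for the top-level defect (the top level of $C_k$ has measure $O(2^{-k})$, so a.e.\ point of $R$ lies in it only finitely often); and the observation that $|M_k|\equiv\prod_{l<k}|c_l|^{-1}$ is literally constant, so the limit is automatically a nonzero multiple of a unimodular eigenfunction and ergodicity is not even needed at that step. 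What your approach buys is self-containedness and an explicit eigenfunction; what the paper's citation buys is brevity and access to the sharper statements of Nadkarni's Chapter 15.

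One small slip: in the paper's convention the spacers are stacked on top of the \emph{right} subcolumn, so the right half of $C_k(j)$ becomes level $j+h_k$ of $C_{k+1}$, not level $j+h_k+s_k$; hence $f_{k+1}=\lambda^{h_k}f_k$ on the right subcolumn and $c_k=\tfrac12(1+\lambda^{h_k})$, which gives $\sum_k(1-|c_k|^2)=\tfrac14\sum_k|\lambda^{h_k}-1|^2$ directly and makes your triangle-inequality step with $h_{k+1}-h_k$ unnecessary. (Your formula would be the correct one in the isomorphic ``spacers on the first subcolumn'' model that the paper's proof passes to.) Either way the estimate closes, so this does not affect the validity of the argument.
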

\begin{proof}
We use the notation of \cite[Corollary 15.57]{Na98}. First, we pass from $T$ to an isomorphic rank-one transformation without any spacers in the last column as done in \cite[15.42]{Na98}. This is done by adding the spacers in the tower transformation to the first column instead of the last column. Then $m_k$ is the number of cuts of the stack and $\gamma_{k, i}$ is the height of each stack, including spacers in the $i$th column of the $k$th iteration of the cutting and stacking transformation. Hence, since $T$ is a tower transformation, $m_k = 2$ for all $k$ and $\gamma_{k, 1} = h_{k} - h_{k - 1}$ and $\gamma_{k, 2} = h_k$ as $\gamma_{k, i}$ are the heights of each stack in the cutting and stacking transformation . It then follows from \cite[Corollary 15.57]{Na98} that
$$\sum_{i = 1}^\infty |\lambda^{h_i} - 1|^2 < \infty.$$
\end{proof}
This suggests that if there exists a metric space $(M, d)$, an isometry $T: M \to M$ such that there exists a rank-one tower transformation whose heights $(h_i)$ satisfy the property that no complex numbers satisfy \thref{AaNa}, then the rank-one transformation is an EIC transformation that isn't weakly mixing. 
%We shall discuss more on what rank-one transformations admit isometric factors in the next section. \\\\
The remainder of this section will be using \thref{AaNa} to either rule out more examples or deduce rigidity results of rotations on the circle.
\begin{prop}\thlabel{Tower}
Let $(q_i)$ be a sequence of natural numbers with $\frac{q_{i + 1}}{q_i} \ge 2$. Let $(S, Y, d)$ be a topological dynamical system with $S$ an isometry. Suppose the metric space $Y$ satisfies $\sum d(S^{q_i}y, y) < \infty$ for some $y \in Y$. Then there exists a nontrivial equivariant map $\phi$ from a rank-one  tower transformation with heights $(q_i)$ to $Y$. In particular, the image of the map is in $\overline{\{S^n y\}}_{n \in \mathbb{N}}$.
\end{prop}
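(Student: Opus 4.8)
The plan is to realize the tower transformation concretely and then push the forward $S$-orbit of $y$ up the tower. Since $q_{k+1}/q_k \ge 2$, the choices $r_k = 2$, $s(k,0) = 0$, $s(k,1) = q_{k+1} - 2q_k \ge 0$ define a rank-one tower transformation $(X, T, \mu, \mathcal B)$ whose stack heights are exactly $h_k = q_k$. For $\mu$-a.e.\ $x$ and all $k$ with $x \in C_k$, write $\ell_k(x) \in \{0, 1, \dots, q_k - 1\}$ for the index of the level of $C_k$ containing $x$. I would then set
\[ \phi(x) \;=\; \lim_{k \to \infty} S^{\ell_k(x)} y, \]
and verify in turn that the limit exists, that $\phi$ is Borel, that $S \circ \phi = \phi \circ T$ a.e., and that $\phi$ is nonconstant.

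The heart of the argument is the recursion for $\ell_k$. Passing from $C_k$ to $C_{k+1}$, the column is cut into a left and a right subcolumn and the right one is stacked above the left; a point remaining in the left subcolumn satisfies $\ell_{k+1}(x) = \ell_k(x)$, whereas a point in the right subcolumn satisfies $\ell_{k+1}(x) = \ell_k(x) + q_k$. Because $S^{\ell_k(x)}$ is an isometry, in both cases
\[ d\!\left(S^{\ell_{k+1}(x)} y,\, S^{\ell_k(x)} y\right) \;\le\; d\!\left(S^{q_k} y,\, y\right). \]
The right-hand side is independent of $x$ and summable by hypothesis, so $\big(S^{\ell_k(x)} y\big)_k$ is Cauchy uniformly in $x$; it therefore converges in $\overline{\{S^n y\}}_{n \in \mathbb N}$ (a closed subset of $Y$, which we may take complete), and this also pins down the location of the image of $\phi$. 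Measurability is immediate, since each map $x \mapsto S^{\ell_k(x)} y$ is Borel with finitely many values (constant on the levels of $C_k$) and $\phi$ is their pointwise limit.

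For equivariance I would show that $\ell_k(Tx) = \ell_k(x) + 1$ for $\mu$-a.e.\ $x$ and all large $k$. This identity holds precisely when $x$ is not on the top level of $C_k$, and the measure of that top level decays geometrically in $k$ (each column is cut into two pieces), hence is summable over $k$. By the Borel--Cantelli lemma, $\mu$-a.e.\ $x$ meets the top level of $C_k$ for only finitely many $k$, so for such $x$ the level-shift identity holds eventually and, by continuity of $S$,
\[ \phi(Tx) \;=\; \lim_{k \to \infty} S^{\ell_k(x) + 1} y \;=\; S\,\phi(x). \]

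Finally, for nontriviality I may assume $Sy \ne y$, since otherwise $\overline{\{S^n y\}}_{n \in \mathbb N}$ is a single point and there is nothing to prove. Fix $\epsilon = \tfrac{1}{3} d(y, Sy)$. The uniform tail bound gives $d\big(\phi(x), S^{\ell_k(x)} y\big) \le \sum_{j \ge k} d(S^{q_j} y, y) \to 0$, so for $k$ large the base of $C_k$ (where $\ell_k \equiv 0$, of positive measure) is carried into $B(y, \epsilon)$, while its $T$-image, the first level of $C_k$ (where $\ell_k \equiv 1$, also of positive measure), is carried into $B(Sy, \epsilon)$. As these balls are disjoint, $\phi$ takes values in two disjoint balls on sets of positive measure and is not a.e.\ constant. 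I expect the equivariance step to be the main obstacle: the clean level-shift $\ell_k(Tx) = \ell_k(x) + 1$ genuinely fails on the top levels of the columns, and it is only the geometric decay of the column widths --- which makes the Borel--Cantelli argument available --- that confines this failure to a null set.
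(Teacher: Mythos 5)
Your proof is correct and follows essentially the same route as the paper's: both define the map level by level on the columns (the paper's $f_k$ is exactly your $x \mapsto S^{\ell_k(x)}y$ on $C_k$) and use the summability hypothesis to get a uniform Cauchy estimate forcing convergence into $\overline{\{S^n y\}}_{n \in \mathbb{N}}$. You are in fact more careful than the paper on two points it glosses over: the a.e.\ equivariance (the paper simply asserts $f_k(Tx)=Sf_k(x)$, which fails on the top levels; your Borel--Cantelli argument fixes this) and the nontriviality of $\phi$, which the paper does not address at all.
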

\begin{proof}
Define $(X, S, \mu, \mathcal{B})$ to be the rank-one tower transformation with heights $q_i$. We may pass to an equivalent finite measure $\nu$ such that $\nu(C_{n + 1} \setminus C_n) = r_n\mu(C_{n + 1} \setminus C_n)$ where $C_k$ denotes the $k$th column of the cutting and stacking construction of $X$. Define a function $g_k: C_k \to Y$ by $g_k(C_k^j) = S^j(y)$ where $C_k^j$ is the $j$th interval in column $C_k$ and $g_k$ is zero on $X \setminus C_k$. Define $f_k: X \to Y$ by $f_k = g_k$ on $C_k$ and $f_k = g_j$ on $C_j \setminus C_{j - 1}$ for all $j \ge k + 1$. We claim that $f_k$ converges in $L^\infty$ to some $\phi$ which has the desired properties. For this, note that on $X\setminus C_k$, $f_k= f_{k+1}$, and so we need to worry only about the difference on $C_k$. $C_{k+1}$ is built by cutting the levels of $C_k$ into $2$ columns of equal size, and so we will let $C_k^j(n)$ denote the $n$th level in the $j$th new column created by cutting $C_k$ in this fashion, where $0\leq j \leq 1$. Observe that on $C_k^j(n)$, $f_{k+1} = S^{jq_k+n}y$, and $f_k = S^ny$, giving that for each $0\leq j\leq 1$,
\begin{align*}d(f_{k+1}, f_k) &= d(S^n(S^{jq_k}y), S^ny)\\
&\le d(S^{q_k}y, y)
\end{align*}
Since
$$\sum d(S^{q_k}y, y) < \infty$$
the $(f_k)$'s form a Cauchy sequence and converge pointwise to some $\phi$. Since $\phi(Tx) = \lim_{k \to \infty} f_k(Tx) = \lim_{k \to \infty} Sf_k(x) = S\phi(x)$ we have that $\phi$ is a desired factor.
\end{proof}
Thus, combining the \cite{GlWe16} result that proves the equivalence between EUC and WM and \cite{AaNa87}, we obtain the following:
\begin{prop}\thlabel{Unitary}
Let $(T, X, \mu)$ be a $\sigma$-finite measure preserving transformation, $(q_i)$ a sequence of integers with $\frac{q_{i + 1}}{q_{i}} \ge 2$ that has a set $B$ such that
$$\sum_{i = 1}^\infty \sqrt{\mu(B \triangle T^{q_i}B)} < \infty.$$
Then there exists a complex number $\lambda$ such that
$$\sum_{i} |1 - \lambda^{q_i}|^2 < \infty.$$
\end{prop}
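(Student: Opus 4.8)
The plan is to recast the summability hypothesis as a strong orbit-rigidity condition for $1_B$ inside $L^2$, feed it into Proposition~\thref{Tower} to produce a rank-one tower factor, and then read off an eigenvalue using Theorem~\thref{AaNa}. Concretely, let $U$ be the Koopman operator $Uf=f\circ T^{-1}$ on $L^2(X,\mu)$, which is unitary because $T$ is invertible and measure preserving, and note that $U^n 1_B=1_{T^nB}$ and
\[
\|U^n 1_B-1_B\|_{L^2(X,\mu)}=\sqrt{\mu(T^nB\triangle B)}.
\]
Since the hypothesis forces each $\mu(B\triangle T^{q_i}B)$ to be finite, I would let $X_1=\{1_{T^nB}:n\in\mathbb Z\}$ with the metric $d(1_{B_1},1_{B_2})=\sqrt{\mu(B_1\triangle B_2)}$, take $Y$ to be its completion inside $L^2(X,\mu)$, and let $S=U|_Y$, a surjective isometry because $U$ permutes the dense orbit bijectively, exactly as in the rigidity construction earlier in this section. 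Writing $y=1_B$, the hypothesis becomes precisely
\[
\sum_{i=1}^\infty d(S^{q_i}y,y)=\sum_{i=1}^\infty\sqrt{\mu(B\triangle T^{q_i}B)}<\infty .
\]

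With this in place, I would apply Proposition~\thref{Tower} to the isometric system $(S,Y,d)$ and the point $y$: since $q_{i+1}/q_i\ge 2$ and the orbit sum converges, there is a nontrivial (nonconstant) Borel equivariant map $\phi$ from the rank-one tower transformation $(X',S',\mu')$ with stack heights $(q_i)$ into $Y$. The key point is that $Y$ sits isometrically inside the Hilbert space $L^2(X,\mu)$ with $S$ the restriction of the unitary $U$; composing $\phi$ with the inclusion $Y\hookrightarrow L^2(X,\mu)$ yields a nonconstant equivariant Borel map $\psi$ satisfying $U\circ\psi=\psi\circ S'$, i.e.\ an equivariant map from $(X',S')$ into a Hilbert space carrying a unitary operator. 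Hence $S'$ is not ergodic with unitary coefficients, and by the Glasner--Weiss equivalence between EUC and WM (\cite{GlWe16}) this forces $S'$ to be not weakly mixing.

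Finally I would extract the eigenvalue. As a rank-one transformation $S'$ is conservative and ergodic, so failure of weak mixing together with the Aaronson--Lin--Weiss criterion (\cite{AaLiWe79}) produces a nonconstant $f\in L^\infty(X',\mu')$ and a scalar $\lambda$ with $f\circ S'=\lambda f$; ergodicity makes $|f|$ constant, so after normalization $\lambda\in\mathbb S^1$ with $\lambda\neq 1$. Because $(X',S',\mu')$ is a $\sigma$-finite measure-preserving rank-one tower transformation with stack heights $(q_i)$, applying the eigenvalue characterization recorded in Theorem~\thref{AaNa} to $\lambda$ gives
\[
\sum_{i=1}^\infty|\lambda^{q_i}-1|^2<\infty ,
\]
which is the asserted $\lambda$.

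The step I expect to be the main obstacle is the spectral input at the end: Theorem~\thref{AaNa} is phrased as an inclusion (the spectrum \emph{contains} every $\lambda$ with the summability property), whereas here I need the converse, that an actual eigenvalue of the height-$(q_i)$ tower \emph{must} satisfy $\sum|\lambda^{q_i}-1|^2<\infty$. This is exactly the content of its proof through \cite[Corollary 15.57]{Na98}, so I would invoke that characterization rather than the stated inclusion, after verifying that the transformation delivered by Proposition~\thref{Tower} is genuinely the $\sigma$-finite measure-preserving tower to which Aaronson--Nadkarni applies (the auxiliary finite measure $\nu$ in Proposition~\thref{Tower} is only a device for the $L^\infty$ convergence and does not alter the measure-preserving structure). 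A secondary detail is the unitary realization of $(Y,d,S)$: the clean embedding $Y\subset L^2(X,\mu)$ with $S=U|_Y$ uses $1_B\in L^2$, i.e.\ $\mu(B)<\infty$, which one should either assume or circumvent by working in the affine coset $1_B+L^2(X,\mu)$. As an alternative that bypasses Proposition~\thref{Tower} entirely, one can argue purely spectrally: convergence of $\sum\|U^{q_i}1_B-1_B\|$ gives $\sum\int_{\mathbb S^1}|z^{q_i}-1|^2\,d\sigma(z)<\infty$ for the spectral measure $\sigma$ of $1_B$, whence $\sum|z^{q_i}-1|^2<\infty$ for $\sigma$-a.e.\ $z$, and since $B$ is not $T$-invariant the measure $\sigma$ is not concentrated at $1$, producing a suitable $\lambda\neq 1$.
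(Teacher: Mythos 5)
Your main argument is essentially the paper's own proof: realize the orbit closure of $1_B$ as a separable metric space with the Koopman isometry, invoke Proposition~\thref{Tower} to get a nontrivial equivariant map from the height-$(q_i)$ tower, conclude failure of EUC and hence of WM, and read off the eigenvalue via Theorem~\thref{AaNa}. Your two caveats are well taken and not addressed in the paper: the conclusion needs the converse direction of \thref{AaNa} (every eigenvalue of the tower satisfies the summability condition), which is indeed what \cite[Corollary 15.57]{Na98} provides as a characterization, and $1_B\in L^2$ requires $\mu(B)<\infty$, which the affine-coset device repairs. Your closing spectral-measure argument is a genuinely shorter alternative that bypasses \thref{Tower} and \thref{AaNa} entirely; its only caveat is that when $B$ is $T$-invariant the spectral measure of $1_B$ sits at $1$, in which case the stated conclusion holds vacuously with $\lambda=1$.
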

\begin{proof}
By \thref{Tower}, there exists a nontrivial equivariant  map from a rank-one tower transformation $S$ with heights $(q_i)$ to the metric space $\overline{\{\chi_{T^nB}\}}_{L^2(X, \mu)}$. This implies that $S$ is not ergodic with unitary coefficients and hence not weakly mixing. By \thref{AaNa}, it follows that
there exist a complex number $\lambda$ such that
$$\sum_{i} |1 - \lambda^{q_i}|^2 < \infty.$$

\end{proof}

Finally, we rule out a family of tower transformations with the quotient of two consecutive heights are bounded. To do this, we need the following lemma:
\begin{lemma}\thlabel{Compact}
Let $(Y, S, d)$ be a topological dynamical system with $Y$ an isometry. Let $(q_i)$ be a sequence with
$$2 \le \frac{q_{i + 1}}{q_i} \le K$$
where $K$ is some positive constant. Suppose there exists $y \in Y$ with $\sum_{i = 1}^\infty d(S^{q_i}y, y) < \infty$. Then $T := \{S^n y\}_{n \in \mathbb{N}}$ is totally bounded.
\end{lemma}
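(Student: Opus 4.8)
The plan is to use that $S$ is an isometry to turn the summability hypothesis into uniform control on how far powers of $S$ move the \emph{entire} orbit, and then to collapse an arbitrary $S^n y$ onto one of finitely many points by a greedy ``base expansion'' of $n$ in the almost-periods $q_i$. Write $c_i := d(S^{q_i}y,y)$, so that $\sum_i c_i<\infty$. Since $S$ is an isometry, for every $n$ and every $a\ge 1$ we have $d\bigl(S^{n}(S^{aq_i}y),S^n y\bigr)=d(S^{aq_i}y,y)$, and by the triangle inequality $d(S^{aq_i}y,y)\le a\,c_i$. Thus translating the index of any orbit point by $a q_i$ moves it a distance at most $a\,c_i$, uniformly in $n$.

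Next I would set up the greedy reduction. Fix $n\in\mathbb{N}$ with $n\ge q_1$ and let $i$ be the largest index with $q_i\le n$; write $n=a q_i+n'$ with $0\le n'<q_i$. By maximality $n<q_{i+1}\le K q_i$, so the digit satisfies $a=\lfloor n/q_i\rfloor<K$, whence $d(S^n y,S^{n'}y)\le a\,c_i< K c_i$. Because $n'<q_i$, the largest index $i'$ with $q_{i'}\le n'$ is strictly smaller than $i$; iterating therefore produces a strictly decreasing sequence of indices and terminates at a remainder below $q_1$. Consequently each index is charged at most once, and the total displacement is dominated by $K\sum_i c_i<\infty$, which already yields boundedness of the orbit.

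To upgrade boundedness to total boundedness, given $\varepsilon>0$ I would choose $N$ with $K\sum_{i>N}c_i<\varepsilon$ and run the same reduction but stop as soon as the running remainder drops below $q_{N+1}$, i.e.\ using only the indices $i>N$. Since those indices are distinct, the accumulated error is at most $K\sum_{i>N}c_i<\varepsilon$, so every $S^n y$ lands within $\varepsilon$ of some $S^{n'}y$ with $0\le n'<q_{N+1}$. As $\{S^{n'}y:0\le n'<q_{N+1}\}$ is a finite set, the orbit $T=\{S^n y\}_{n\in\mathbb{N}}$ is covered by finitely many $\varepsilon$-balls, proving total boundedness.

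The step I expect to be the main obstacle is the error bookkeeping in the greedy reduction: one must guarantee that each almost-period $q_i$ is charged at most once, so that the accumulated error is controlled by a \emph{tail} of $\sum_i c_i$ rather than by an a priori unbounded number of reduction steps. This is precisely where both ratio hypotheses enter — the upper bound $q_{i+1}/q_i\le K$ forces every digit $a$ to satisfy $a<K$, while the lower bound $q_{i+1}/q_i\ge 2$ makes the $q_i$ strictly increasing (indeed $q_i\to\infty$), which is what makes the indices strictly decrease and hence appear at most once.
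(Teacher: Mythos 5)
Your proof is correct and rests on the same core idea as the paper's: a base expansion of $n$ in the scales $q_i$ with digits bounded by $K$ (forced by $q_{i+1}/q_i\le K$), each index charged at most once, so that the displacement from $S^n y$ to some $S^{n'}y$ with $n'<q_{L}$ is controlled by $K$ times a tail of $\sum_i d(S^{q_i}y,y)$, yielding finitely many $\varepsilon$-ball centers $y,Sy,\dots,S^{q_L-1}y$. The only difference is packaging: the paper builds the set $B$ of good return times as sums $\sum_{j\ge L}a_jq_j$ and shows it is syndetic with gap $q_L$, whereas you run the equivalent greedy division directly on each $n$, which is a somewhat cleaner presentation of the same argument.
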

\begin{proof}
Let $\frac{q_{i + 1}}{q_i}$ be bounded by $K$ and choose $L$ large enough so that
$$\sum_{i = L}^\infty d(S^{q_i}y, y) < \frac{\epsilon}{K}.$$
We assert that $T$ is covered by $q_L - 1$ balls of radius $2\epsilon$ centered on $y, Sy, S^2y, \dots, S^{q_L - 1}y$. Let $A = \{n: n \in \mathbb{N} S^n y \in B_\epsilon(y)\}$. Let
$$H = \{h \in \mathbb{N}: h = h_1 + h_2 + \cdots + h_k, k \in \mathbb{N}, h_i = q_j, j \ge L, h_i \text{ are distinct}\}.$$
Notice that if $h \in H$, then
$$d(S^hy, y) \le \sum_{j = 1}^k d(S^{h_j}y, y) \le \sum_{i = L}^\infty d(S^{q_i}y, y) < \frac{\epsilon}{K}.$$
Let
$$P = \{p \in \mathbb{N}: p = p_1 + p_2 + p_3 + \cdots + p_K, p_i \in H \forall i\}.$$
The set $P$ is contained in $A$ because for $p \in P$,
$$d(S^py, y) \le \sum_{j = 1}^K d(S^{p_j}y, y) \le \epsilon.$$
Then the subset
$$B = \{b \in \mathbb{N}: b = a\sum_{j = L}^\infty a_j q_{j}, 0 \le a_i \le \lfloor q_{i + 1}/q_{i} \rfloor, \text{ all but finitely many } a_j \text{ are } 0 \} \subset P.$$
is contained in $A$. Let $r$ be the least integer in $B$ greater than a given $b \in B$. We claim that $b - r \le q_L$. Write
$$b = \sum_{j = L}^\infty a_j q_{j}$$
where we use similar notation as before. Pick the least $\ell$ with $a_\ell \neq \lfloor q_{\ell + 1}/q_\ell \rfloor$. This exists since all but finitely many $a_j$ are zero. Then
$$r \le s := (a_{\ell} + 1) q_\ell + \sum_{j = \ell + 1}^\infty a_j q_j \le b + q_L$$
since
$$b + q_L =  (a_L + 1)q_L + \cdots + a_\ell q_\ell + \cdots \ge (a_{L + 1} + 1)q_{L + 1} \cdots + a_\ell q_\ell + \cdots \ge \cdots \ge (a_\ell + 1)q_\ell + \cdots.$$
Hence $r - b \le q_L$ as claimed. The difference between two consecutive elements in $A$ is less than or equal to $q_L$. Since $B_\epsilon(S^jy) \supseteq j + A$ (since $S$ is an isometry), it follows that the balls of radius $\epsilon$ around $y, Sy, \dots, S^{q_L - 1}y$ covers $T$.
\end{proof}
Combining \thref{Tower} with \thref{Compact}, we have the following:

\begin{thm}\thlabel{Eigencoro}
Let $(Y, d, S)$ be a topological dynamical system with $S$ an isometry. Suppose $(q_i)$ is a sequence of integers such that $\frac{q_{i + 1}}{q_i} < K$ for some $K$ and there exists some $y \in Y$ such that $\sum_{i = 1}^\infty d(S^{q_i}y, y) < \infty$. Then there exists $\lambda \in \mathbb{T}$ such that $\sum_{i = 1}^\infty |1 - \lambda^{q_i}|^2 < \infty$. In particular, the tower transformation with heights $(q_i)$ is not weakly mixing.
\end{thm}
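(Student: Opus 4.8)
The plan is to combine the total boundedness supplied by \thref{Compact}, the factor map supplied by \thref{Tower}, and the spectral dictionary \thref{AaNa}, manufacturing a nontrivial eigenfunction of the tower transformation exactly as in the proof of \thref{locallycompact}. Throughout I work in the regime $2 \le q_{i+1}/q_i \le K$, which is where both auxiliary results apply and in which the heights $(q_i)$ genuinely arise from a one-cut tower transformation.

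First I would establish that the relevant orbit closure is compact. By \thref{Compact} the set $\{S^n y\}_{n \in \mathbb{N}}$ is totally bounded; replacing $Y$ by its completion (to which $S$ extends as an isometry) if necessary, the closure $Z := \overline{\{S^n y\}}$ is a compact metric space on which $S$ restricts to an isometry with $y$ having dense orbit. Since an isometry of a compact metric space with one dense orbit has every orbit dense, $(Z, S|_Z)$ is minimal and hence a Kronecker system: following the group-operation construction in the proof of \thref{locallycompact} (which is cleaner here, as $Z$ is compact and its continuous characters already separate points without appeal to Pontryagin duality on a noncompact group), $Z$ carries a compact abelian group structure with identity $y$ for which $S|_Z$ is translation by $a := Sy$.

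Next I would produce the eigenfunction. By \thref{Tower} there is a nontrivial Borel equivariant map $\phi$ from the rank-one tower transformation $(X, U, \mu)$ with heights $(q_i)$ into $\overline{\{S^n y\}} = Z$. Nontriviality of $\phi$ forces $a = Sy \neq y$ (otherwise $S|_Z = \mathrm{id}$, $Z = \{y\}$, and $\phi$ would be constant), so there is a character $\chi \colon Z \to \mathbb{S}^1$ with $\lambda := \chi(a) \neq 1$. Then $g := \chi \circ \phi$ satisfies $g \circ U = \chi \circ S \circ \phi = \lambda\,(\chi \circ \phi) = \lambda g$; since $g$ takes values in $\mathbb{S}^1$, a constant value $c$ would force $\lambda c = c$ and hence $\lambda = 1$, a contradiction, so $g$ is a nonconstant eigenfunction and, by the eigenfunction characterization of weak mixing recalled in Section~2, $U$ is not weakly mixing. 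This establishes the ``in particular'' clause.

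Finally, invoking \thref{AaNa} as the eigenvalue characterization for the rank-one tower transformation $U$ with stack heights $(q_i)$, the presence of the nontrivial eigenvalue $\lambda$ yields $\sum_{i} |1 - \lambda^{q_i}|^2 < \infty$, the desired $\lambda \in \mathbb{T}$; this is the same final deduction used in \thref{Unitary}. The main obstacle is the middle step—erecting the compact Kronecker group structure on $Z$ and extracting a character nontrivial on $a$—but once compactness of $Z$ is in hand from \thref{Compact} this is precisely the construction already carried out in \thref{locallycompact}, so the real work is checking that that argument transfers verbatim to the compact orbit closure; the surrounding steps are formal.
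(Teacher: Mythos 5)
Your argument is correct and follows essentially the same route as the paper: \thref{Tower} supplies the equivariant map, \thref{Compact} supplies compactness of the orbit closure, and \thref{AaNa} converts the resulting eigenvalue into the summability condition. The only difference is that you unpack the Kronecker group and character construction from the proof of \thref{locallycompact} inline to exhibit the eigenfunction explicitly, whereas the paper simply cites that proposition to conclude the tower transformation is not weakly mixing.
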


\begin{proof}
By \thref{Tower}, there exists a nontrivial equivariant Borel map from a rank-one tower transformation having heights $(q_i)$ to $(Y, S, d)$. The image of the  map is in particular contained in $C := \overline{\{S^ny\}}_{n \in \mathbb{N}}$. By, \thref{Compact} $C$ is compact, so the tower transformation  is not weakly mixing by \thref{locallycompact}. Thus, by \thref{AaNa}, $\sum_{i = 1}^\infty |1 - \lambda^{q_i}|^2 < \infty$.
\end{proof}

\section{Open Questions}
\begin{qn}
Does there exist $(n_i)$ be a sequence such that $\frac{n_{i + 1}}{n_i}$ is unbounded and $(n_i)$ is not a rigidity sequence for any irrational number $\alpha$ but there exists a topological dynamical system $(Y, S, d)$ with $S$ an isometry and an element $x \in Y$ such that $\sum_{i = 1}^\infty d(S^{n_i}x, x) < \infty$?
\end{qn}
\begin{rem}
$\frac{n_{i + 1}}{n_i}$ cannot go to infinity because otherwise, as observed by \cite{BERGELSON_2013} and \cite{ET}, $(n_i)$ would be a rigidity sequence for uncountably many irrational numbers.
\end{rem}
\begin{qn}
Can one improve \thref{Eigencoro} so that $\sum_{i = 1}^\infty |1 - \lambda^{q_i}| < \infty?$
\end{qn}

\bibliographystyle{plain}
\bibliography{NonsingularBib}

\end{document}